 \newtheorem{thm}{Theorem}[section]
 \newtheorem{cor}[thm]{Corollary}
 \newtheorem{lem}[thm]{Lemma}
 \newtheorem{prop}[thm]{Proposition}
 \theoremstyle{definition}
 \newtheorem{defn}[thm]{Definition}
 \theoremstyle{remark}
 \newtheorem{rem}[thm]{Remark}
 \theoremstyle{definition}
\newtheorem{nota}[thm]{Notation}
 \newcommand{\reviewtimetoday}[2]{
 \reviewtimetoday{\today}{Draft Version}

\usepackage{xypic} \usepackage{amsthm}

\begin{document}

\title[Higher secant varieties of $\mathbb P^n \times\mathbb P^ 1$]
{Higher secant varieties of $\mathbb P^n \times\mathbb P^ 1$ embedded 
in bi-degree $(a,b)$}

\author[E. Ballico]{Edoardo Ballico}
\address[E. Ballico]{Dept. of Mathematics, 
 University of Trento, 
38123 Povo (TN), Italy}
\email{ballico@science.unitn.it}

\author[A. Bernardi]{Alessandra Bernardi}
\address[A. Bernardi]{INRIA(Institut National de Recherche en Informatique et en Automatique), Project Galaad, 2004 route des Lucioles, BP 93, 06902 Sophia Antipolis, France.}
\email{alessandra.bernardi@iria.fr}

\author[M.V.Catalisano]{Maria Virginia Catalisano}
\address[M.V.Catalisano]{DIPTEM - Dipartimento di Ingegneria della Produzione, Termoenergetica e Modelli
Matematici, Universit\`{a} di Genova, Piazzale Kennedy, pad. D 16129 Genoa, Italy.}
\email{catalisano@diptem.unige.it}


\maketitle


\begin{abstract} In this paper we compute the dimension of all the higher secant varieties to the Segre-Veronese embedding of $\mathbb{P}^n\times \mathbb{P}^1$ via the section of the sheaf $\mathcal{O}(a,b)$ for any $n,a,b\in \mathbb{Z}^+$. We relate this result to the Grassmann Defectivity of Veronese varieties and we classify all the  Grassmann $(1,s-1)$-defective Veronese varieties.
\end{abstract}

\section*{Introduction}

Let $X\subset \mathbb{P}^N$ be a projective non degenerate variety of dimension $n$.   We will always work with projective spaces defined over  an
algebraically closed field $K$ of characteristic $0$. The Zariski closure of the union of all the $(s-1)$-dimensional projective spaces that are $s$ secant to $X$ is called the {\it $s^{th}$ secant variety of $X$} that we will denote with $\sigma_s(X)$ (see Definition \ref{secant}). For these  varieties it is immediate to find an expected dimension that is $exp\dim \sigma_s(X) =\min \{ sn+s-1 , N \}$. The so called  ``Terracini's Lemma" (we recall it here in  Lemma \ref{TerrLemma}) shows that actually $\dim \sigma_s(X) \leq exp\dim \sigma_s(X)$. The varieties $X\subset \mathbb{P}^N$ for which $\dim \sigma_s(X) < exp\dim \sigma_s(X) $ are said to be $(s-1)$-{\it defective}. For a fixed variety $X\subset \mathbb{P}^N$, a complete classification of all the defective cases is known only if $X$ is a Veronese variety (see \cite{AH95}). 

The main result of this paper is the complete  classification of all the defective secant varieties of the Segre-Veronese varieties $X_{n,1,a,b}$ that can be obtained by embedding $\mathbb{P}^n\times \mathbb{P}^1$ with the section of the sheaf $\mathcal{O}(a,b)$ for any couple of positive integer $(a,b)$ (see Theorem \ref{mainteor}).  

Nowadays this kind of problems turns out to be of certain relevance also in the applications like Blind Identification in Signal Processing and Independent Component Analysis (see eg. \cite{ComoR06:SP} and \cite{Yere10:aalborg}) where it is often required to determine the dimensions of the varieties parameterizing partially symmetric tensor of certain rank. The Segre-Veronese embedding of $\mathbb{P}^n\times \mathbb{P}^1$  with the section of the sheaf $\mathcal{O}(a,b)$ parameterizes in fact rank 1 partially symmetric tensors of order $a+b$ contained in $\mathbb{P}(S^aV_1\otimes S^bV_2)$ where $V_1,V_2$ are two vector spaces of dimensions $n+1$ and $2$ respectively and with $S^dV$ we mean the space of symmetric tensors over a vector space $V$. The generic element of the $s^{th}$ secant variety of Segre-Veronese varieties is indeed a partially symmetric tensor of rank $s$ (see eg. \cite{MR2391665} for a detailed description).

The study of the dimensions of the secant varieties of Segre-Veronese varieties of two factors begun with F. London (see \cite{London} and  \cite{DF03}, 
\cite{CaCh} for a more recent approach)
who studied the case of  $\mathbb{P}^ 1 \times \mathbb{P}^ 2$
embedded in bi-degree $(1,3)$.
A  first generalization for $\mathbb{P}^ 1 \times \mathbb{P}^ 2$
embedded in bi-degree $(1,d)$ is given in \cite{DF03}. The
case for $\mathbb{P}^ 1 \times \mathbb{P}^ 2$ embedded in any bi-degree $(d_{1},
d_{2})$ is done by K. Baur and J. Draisma in \cite{BD}. L. Chiantini e C. Ciliberto in \cite{ChCi} handle with the case $\mathbb{P}^ 1
\times \mathbb{P}^ n$ embedded in bi-degree $ (d,1)$.
In the paper \cite{CGG4}  one can find  the cases $\mathbb{P}^ m \times \mathbb{P}^ n$
with bi-degree $ (n+1,1)$, $\mathbb{P}^ 1 \times \mathbb{P}^ 1$ with bi-degree $
(d_{1}, d_{2})$ and $\mathbb{P}^ 2 \times \mathbb{P}^ 2 $ with bi-degree $(2,2)$.
In \cite{Abrescia} S. Abrescia studies  the cases $\mathbb{P}^ n \times \mathbb{P}^ 1 $ in bi-degree
$(2,2d+1)$, $\mathbb{P}^ n \times \mathbb{P}^ 1$ in bi-degree $ (2,2d)$, and $\mathbb{P}^
n \times \mathbb{P}^ 1 $ in bi-degree $ (3,d)$. 
In \cite{BCC} the authors study the cases of $\mathbb{P}^n \times \mathbb{P}^m$ embedded in bi-degree $(1,d)$.

A recent
result on  $\mathbb{P}^ n \times \mathbb{P}^ m$ in bi-degree $ (1,2)$ is in
\cite{ABo}, where H. Abo and M. C. Brambilla prove the existence of two functions
$\underline{s}(n,m)$ and $\overline{s}(n,m)$ such that
$\sigma_s(X_{(n,m,1,2)})$ has the expected dimension for
$s\leq \underline{s}(n,m)$ and for $s\geq \overline{s}(n,m)$. In
the same paper it is also shown that $X_{(1,m,1,2)}$ is never
defective and all the defective cases for $X_{(2,m,1,2)}$ are
described. In  \cite{A10} H.  Abo is interested in  the cases  $\mathbb{P}^ n \times \mathbb{P}^ m$, when $m = n$ or $m = n+1$.

In \cite[Theorem 1.2] {AB09} H. Abo and M. C.  Brambilla  study the cases of  $\mathbb{P}^n \times \mathbb{P}^1$ embedded in bi-degree $(a,b)$, for $b \geq 3$. Our Theorem \ref{mainteor} extends their result, eliminating the restricting hypothesis $b \geq 3$.
Our proof is not based on their result, moreover, since our theorem cover the case $b=1$, we may relate our result   with the notion of Grassmann defectivity (see Section \ref{Gd} and Definition \ref{GrassDef}). That allows to translate that result in terms of the number of homogeneous polynomials of certain degree $a$ that can be written Êas linear combination of the same $a$-th powers of linear forms (see Corollary \ref{cor1}).

Regarding the papers that treat the Êknowledge of Grassmann defectivity we quote the following ones: \cite{ChCi}  proves that the curves are never $(k,s-1)$-defective, \cite{ChCi05}  completely classify the case of $(1,k)$-defective surfaces and Ê\cite{Cop} studies the case of $(2,3)$-defective threefolds. Our Corollary \ref{cor1} fits in this framework showing that the Veronese varieties $\nu_a(\mathbb{P}^n)$ are $(1,s-1)$-defective if and only if $a=3$ and $s=5$.

\section{Preliminaries and Notation}\label{prelim}

Let us
recall the notion of higher secant varieties and some
 classical results which we will often use.
 For definitions and proofs we refer the reader to \cite{CGG4}.

\begin{defn}\label{secant} {\rm Let $X\subset \mathbb P^N$ be a projective variety.
We define the
 $s^{th}$ {\it higher secant variety}  of $X$, denoted by $\sigma_{s}(X)$, as the Zariski closure of the union of all linear spaces spanned by  $s$ points of $X$, i.e.:
$$\sigma_{s}(X):= \overline{ \bigcup_{P_{1}, \ldots , P_{s}\in  X} \langle P_{1}, \ldots , P_{s} \rangle}\subset \mathbb P^ N.$$
The expected dimension of $\sigma_{s}(X)$  is
$$exp\dim \sigma_s(X) = \min\{N, s(\dim X+1)-1\},$$
and when $\sigma_s(X)$ does not have the expected dimension, $X$ is
said to be $(s-1)$-{\it defective}, and the positive integer
$$
\delta _{s-1}(X) = {\rm min} \{N, s(\dim X+1)-1\}-\dim \sigma_s(X)
$$
is called the  $(s-1)${\it-defect} of $X$. }\end{defn} 

\begin{rem}

Observe that classical authors used the notation  $Sec_{s-1}(X)$ for 
 the closure of the union of all linear 
spaces spanned by $s$ independent points of $X$, that is, for the object that we denote with $\sigma_s(X)$.

\end{rem}
The basic
tool to compute the dimension of $\sigma_{s}(X)$ is Terracini's
Lemma (\cite{Terracini}):

\begin{lem}[{\bf Terracini's Lemma}]\label{TerrLemma}

 Let $X\subset \mathbb P^ N$  be a projective, irreducible
variety, and let $P_1,\ldots ,P_s \in X$  be s generic
points. Then the projectivized tangent space to $\sigma_{s}(X)$ at a
generic point $Q \in \langle P_1,\ldots ,P_s \rangle$  is the linear span in
$\mathbb P^ N$ of the tangent spaces $T_{X, P_i}$ to $X$ at $P_i$,
$i=1,\ldots ,s$, hence
$$ \dim \sigma_{s}(X) = \dim  \langle T_{X,P_1},\ldots ,T_{X,P_s}\rangle.$$
\end{lem}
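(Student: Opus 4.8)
The plan is to pass to affine cones and realize $\sigma_s(X)$ as the image of an explicit summation morphism whose differential is transparently a sum of tangent spaces. Write $\widehat X\subset \affine^{N+1}$ for the affine cone over $X$, and for $P\in X$ let $\widehat T_{X,P}\subset\affine^{N+1}$ denote the affine tangent space to $\widehat X$ at a representative of $P$, i.e. the affine cone over the projective tangent space $T_{X,P}$. I would introduce the morphism
$$\Phi\colon \underbrace{\widehat X\times\cdots\times\widehat X}_{s}\To \affine^{N+1},\qquad \Phi(v_1,\ldots,v_s)=v_1+\cdots+v_s.$$
Because $\widehat X$ is a cone, each $v_i$ may be rescaled freely, so as the $v_i$ range over $\widehat X$ the sum $\sum v_i$ sweeps out exactly the union of the affine cones over the spans $\langle P_1,\ldots,P_s\rangle$; hence $\overline{\operatorname{Im}\Phi}=\widehat{\sigma_s(X)}$, the affine cone over $\sigma_s(X)$.

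The second step is the differential computation, which is immediate. Since $\Phi$ is the restriction to $\widehat X^{\,s}$ of the linear summation map on $(\affine^{N+1})^{s}$, and the tangent space to a product is the direct sum of the factors' tangent spaces, the differential at a smooth point $(v_1,\ldots,v_s)$ is again summation:
$$d\Phi_{(v_1,\ldots,v_s)}(w_1,\ldots,w_s)=w_1+\cdots+w_s,\qquad w_i\in \widehat T_{X,P_i}.$$
Thus $\operatorname{Im} d\Phi=\widehat T_{X,P_1}+\cdots+\widehat T_{X,P_s}$, and this linear subspace of $\affine^{N+1}$ is precisely the affine cone over the projective span $\langle T_{X,P_1},\ldots,T_{X,P_s}\rangle$.

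The crucial third step is to identify $\operatorname{Im} d\Phi$ with the honest tangent space of the image variety. Here I would invoke generic smoothness, available because the ground field $K$ has characteristic $0$: for a morphism that is dominant onto its image closure, there is a dense open subset of $\overline{\operatorname{Im}\Phi}$ over which $\Phi$ is smooth, and at every source point lying above it the differential is \emph{surjective} onto the tangent space of $\overline{\operatorname{Im}\Phi}$. Choosing $P_1,\ldots,P_s$ generic in $X$ (so each lies in the smooth locus, $X$ being irreducible) and $Q\in\langle P_1,\ldots,P_s\rangle$ generic, the corresponding point $(v_1,\ldots,v_s)$ with $\sum v_i$ representing $Q$ is a generic point of the source and $Q$ is a smooth point of $\sigma_s(X)$. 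Therefore the affine tangent space to $\widehat{\sigma_s(X)}$ at $Q$ equals $\operatorname{Im} d\Phi=\widehat T_{X,P_1}+\cdots+\widehat T_{X,P_s}$; projectivizing gives the asserted equality of $T_{\sigma_s(X),Q}$ with $\langle T_{X,P_1},\ldots,T_{X,P_s}\rangle$, and since $Q$ is a smooth point, $\dim\sigma_s(X)=\dim\langle T_{X,P_1},\ldots,T_{X,P_s}\rangle$ follows.

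The main obstacle is exactly this last identification: one must control the rank of $d\Phi$ so that its image is the \emph{whole} tangent space to $\overline{\operatorname{Im}\Phi}$ and not merely a proper subspace of it. This is where the genericity of the $P_i$ and of $Q$, together with the characteristic-$0$ hypothesis guaranteeing generic smoothness of a dominant morphism, are indispensable; without them the inclusion $\operatorname{Im} d\Phi\subseteq T_Q\,\widehat{\sigma_s(X)}$ could be strict. A minor technical point to dispatch is the bookkeeping between the affine cone and its projectivization, namely checking that the chosen representatives $v_i$ are smooth points of $\widehat X$ away from the vertex and that $Q\neq 0$, so that projective tangent spaces are recovered cleanly from the affine ones.
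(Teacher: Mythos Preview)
Your argument is correct and is essentially the standard modern proof of Terracini's Lemma via the summation map on affine cones together with generic smoothness in characteristic~$0$. There is nothing to compare against, however: the paper does not supply a proof of this lemma at all, it merely states it and refers to Terracini's original article \cite{Terracini}. So your proposal goes beyond what the paper does, and does so soundly.
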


\begin{nota}\label{SV} We now fix the notation that we will adopt in this paper.

For all non-negative integers $n,m,a,b$,  let  $N= {n+a \choose n} {m+b \choose m}-1,$ and let $X_{(n,m,a,b)} \subset \mathbb P^N$  be   the embedding of $\mathbb P^n
\times \mathbb P^m$ into  $\mathbb P^N$ via the sections of the sheaf ${\mathcal O}(a, b)$.
This variety is classically known as a {\it two-factors Segre-Veronese variety}.

 Now define the integers $q_{(n,m,a,b)}$,  $r_{(n,m,a,b)}$, and  $q^*_{(n,m,a,b)}$:
  $$
 q_{(n,m,a,b)}=\left  \lfloor \frac{      {n+a\choose n}  {m+b\choose m}          }   {       (n+m+1)     }     \right  \rfloor
 ;
 $$
$$
r_{(n,m,a,b)}= {n+a\choose n}  {m+b\choose m}- (n+m+1)  q_{(n,m,a,b)};
 $$
 $$q^*_{(n,m,a,b)} =\left \lceil   \frac{ {n+a\choose n}  {m+b\choose m}} { (n+m+1)  }\right  \rceil = \left \{ 
 \begin{matrix}   q_{(n,m,a,b)} &\  {\rm  for } \  \ r_{(n,m,a,b)} =0  \\
\\
 q_{(n,m,a,b)} +1 & {\rm  for }  \  \ r_{(n,m,a,b)}  >0  \\
\end{matrix}
  \right. .
  $$
 Moreover, let us introduce the following two integers:
$$
e_{(n,m,a,b)}= \max \{s \in \mathbb N  \  \ { \rm such \  that } \  \dim \sigma_s(X_{(n,m,a,b)} )= s(n+m+1)-1\},
$$
$$
e^*_{(n,m,a,b)}= \min \{s \in \mathbb N  \  \ { \rm such \  that } \  \dim \sigma_s(X_{(n,m,a,b)} )= N\}.
$$
\begin{rem}

It is  easy to see that
$$e_{(n,m,a,b)} \leq q_{(n,m,a,b)} \leq q^*_{(n,m,a,b)}  \leq  e^*_{(n,m,a,b)}.$$
For $e_{(n,m,a,b)} = q_{(n,m,a,b)}$ and $ q^*_{(n,m,a,b)}=  e^*_{(n,m,a,b)}$,
 for any $s$ we have:
$$\dim \sigma_s(X_{(n,m,a,b)} )=
\min\{N, s(n+m+1)-1\},$$
 that is, the expected dimension. 
 
 Hence, 
 if $e_{(n,m,a,b)} = q_{(n,m,a,b)}$ and $ q^*_{(n,m,a,b)}=  e^*_{(n,m,a,b)}$, then
$\sigma_s(X_{(n,m,a,b)} ) $ is never defective.

\end{rem}

\end{nota}
\medskip
\medskip

A consequence of Terracini's Lemma is the following Corollary (see
\cite[Section 1]{CGG4} or  \cite [Section 2] {ABo} for a proof of
it).

\begin {cor}\label{corTer}
Let  $R = K [x_0, \dots, x_n,y_0, \dots, y_m]$ be the multigraded
coordinate ring of $ \mathbb P^n\times \mathbb P^m$, 
let $Z \subset \mathbb P^n \times \mathbb P^m$ be a set of $s$
generic double points, 
let $I_Z \subset R $ be the multihomogeneous ideal of $Z$, and let  $ H
(Z,(a,b)) $ be the multigraded Hilbert function of $Z$. Then
$$
\dim   \sigma_s \left ( X_{(n,m,a,b)}   \right ) = N - \dim (I_Z)_{(a,b)} =
 H (Z,(a,b))  -1 .
$$ 
\end{cor}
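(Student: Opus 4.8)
The plan is to read off the dimension of $\sigma_s(X_{(n,m,a,b)})$ from Terracini's Lemma (Lemma \ref{TerrLemma}) by translating ``spans of tangent spaces'' into ``forms of bidegree $(a,b)$ with prescribed vanishing''. Set $V := R_{(a,b)}$, the $(N+1)$-dimensional space of bihomogeneous polynomials of bidegree $(a,b)$. The Segre--Veronese embedding presents $X_{(n,m,a,b)}$ as the image of $\mathbb P^n\times\mathbb P^m$ under the morphism $\varphi$ associated with the complete linear system $|\mathcal O(a,b)|$; thus hyperplanes $H\subset\mathbb P^N$ correspond to nonzero $f\in V$ up to scalar, with $\varphi(p)\in H_f$ if and only if $f(p)=0$, and more generally $\varphi^{-1}(H_f\cap X)$ is the divisor $\{f=0\}$ on $\mathbb P^n\times\mathbb P^m$. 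By Lemma \ref{TerrLemma}, for generic $p_1,\dots,p_s\in\mathbb P^n\times\mathbb P^m$ and $P_i:=\varphi(p_i)$ we have $\dim\sigma_s(X_{(n,m,a,b)})=\dim\langle T_{X,P_1},\dots,T_{X,P_s}\rangle$, so it suffices to compute the codimension of this linear span in $\mathbb P^N$, i.e. the dimension of the space of linear forms on $\mathbb P^N$ vanishing on it.

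The key step is to identify, for a single point $p=p_i$, the linear forms that vanish on the embedded tangent space $T_{X,\varphi(p)}$. Since $\varphi$ is an immersion, $T_{X,\varphi(p)}=d\varphi_p\big(T_{\mathbb P^n\times\mathbb P^m,\,p}\big)$; writing $\varphi$ in affine coordinates on a chart through $p$ and applying the chain rule, one checks that $H_f\supseteq T_{X,\varphi(p)}$ exactly when $f$ vanishes at $p$ together with all its first-order partial derivatives, that is, when the divisor $\{f=0\}$ is singular at $p$. Equivalently, $f\in (I_{2p})_{(a,b)}$, where $2p$ denotes the first infinitesimal neighbourhood (the double point) supported at $p$. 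This computation of $d\varphi_p$ is the one genuinely substantive point, and it is precisely where double points enter the picture; the rest is bookkeeping.

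Intersecting over $i=1,\dots,s$, a linear form vanishes on $\langle T_{X,P_1},\dots,T_{X,P_s}\rangle$ if and only if it lies in $\bigcap_{i=1}^s(I_{2p_i})_{(a,b)}=(I_Z)_{(a,b)}$, where $Z=2p_1+\dots+2p_s$ is the scheme of $s$ generic double points. Hence this span has codimension $\dim_K(I_Z)_{(a,b)}$ in $\mathbb P^N$, giving $\dim\sigma_s(X_{(n,m,a,b)})=N-\dim_K(I_Z)_{(a,b)}$. Finally, the exact sequence $0\to (I_Z)_{(a,b)}\to R_{(a,b)}\to (R/I_Z)_{(a,b)}\to 0$ yields $\dim_K(I_Z)_{(a,b)}=(N+1)-H(Z,(a,b))$ by the definition of the multigraded Hilbert function, and substituting gives $\dim\sigma_s(X_{(n,m,a,b)})=H(Z,(a,b))-1$, as claimed. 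The genericity of the $p_i$ is used only to license the appeal to Terracini's Lemma; no further property of a general configuration of double points is needed for this reduction.
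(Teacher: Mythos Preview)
Your argument is correct and is precisely the standard derivation of this corollary from Terracini's Lemma: identify hyperplanes containing the embedded tangent space at $\varphi(p)$ with forms in $R_{(a,b)}$ singular at $p$, intersect over the $s$ points, and translate the resulting codimension into the Hilbert function. The paper itself does not supply an independent proof but simply refers the reader to \cite[Section~1]{CGG4} and \cite[Section~2]{ABo}, where exactly this argument appears; so your approach coincides with what the paper invokes.
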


Now we recall  the fundamental  tool which allows us to convert
certain questions about ideals of varieties in multiprojective
space to questions about ideals in standard polynomial rings (for
a more general statement see \cite[Theorem 1.1]  {CGG4}) .

\begin{thm}\label{metodoaffineproiettivo} Notation as in \ref{SV},
let $X_{(n,m,a,b)}\subset \mathbb P^N$   be a two factor Segre-Veronese variety, and let $Z \subset
\mathbb P^n \times \mathbb P^m$  be a set of $s$
generic double points. Let
$H_{1}, H_{2}\subset \mathbb P^ {n+m}$ be  generic projective linear
spaces of dimensions $n-1$ and $m-1$, respectively, and let
$P_{1}, \dots ,P_{s} \in \mathbb P^{n+m}$  be
generic points. Denote by
$$bH_1+aH_2+2P_{1}+ \cdots + 2P_s \subset \mathbb P^{n+m}$$
 the scheme defined
by the ideal sheaf
${\mathcal I}^{b}_{H_1}\cap {\mathcal I}_{H_2}^a \cap {\mathcal I}^{2}_{P_1}\cap \dots \cap
{\mathcal I}^{2}_{P_s}\ \subset {\mathcal O}_{P^{n+m }  }$.
Then
$$
 \dim (I_Z)_{(a,b)} = \dim (I_{bH_1+aH_2+2P_1+ \cdots + 2P_s})_{a+b}
$$
hence
$$
\dim  \sigma_s \left ( X_{(n,m,a,b)}  \right )  = N - \dim (I_{bH_1+aH_2+2P_1+ \cdots + 2P_s})_{a+b}  .
$$
\end{thm}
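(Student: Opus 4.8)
The plan is to prove both displayed equalities at once, by producing a single explicit linear isomorphism $\Phi$ carrying $(I_Z)_{(a,b)}$ onto $(I_{bH_1+aH_2+2P_1+\cdots+2P_s})_{a+b}$; the stated value of $\dim\sigma_s(X_{(n,m,a,b)})$ then drops out of Corollary~\ref{corTer} upon substituting $\dim(I_Z)_{(a,b)}=\dim(I_{bH_1+aH_2+2P_1+\cdots+2P_s})_{a+b}$. Since both quantities in the asserted equality are generic dimensions, unchanged by a projective change of coordinates, it suffices to establish it for one convenient generic configuration of $H_1,H_2$ and of the points.

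Concretely, I would fix homogeneous coordinates $z_0,\dots,z_{n+m}$ on $\mathbb{P}^{n+m}$, take $H_1=V(z_0,z_1,\dots,z_m)$ and $H_2=V(z_0,z_{m+1},\dots,z_{n+m})$ --- skew linear spaces of dimensions $n-1$ and $m-1$ whose span is the hyperplane $V(z_0)$, hence a generic pair --- and let $\psi$ be the birational map from $\mathbb{P}^{n+m}$ to $\mathbb{P}^n\times\mathbb{P}^m$ given by the product of the linear projections away from $H_2$ and away from $H_1$, namely $\psi(z)=([z_0:z_{m+1}:\cdots:z_{n+m}],[z_0:z_1:\cdots:z_m])$. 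The key structural point is that on the chart $U=\{z_0\neq0\}$ of $\mathbb{P}^{n+m}$ the map $\psi$ restricts to an isomorphism onto the chart $V=\{x_0\neq0\}\times\{y_0\neq0\}$ of $\mathbb{P}^n\times\mathbb{P}^m$, with inverse obtained by setting $z_0=1$. Composition with $\psi$ --- the substitution $x_0,y_0\mapsto z_0$, $x_i\mapsto z_{m+i}$, $y_j\mapsto z_j$ --- converts a bidegree-$(a,b)$ form $F$ into a degree-$(a+b)$ form $\Phi(F)$ in $K[z_0,\dots,z_{n+m}]$.

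I would then identify the image of $\Phi$. A short bookkeeping on monomials shows that $\Phi$ is injective, sending a monomial $x_0^{\alpha_0}\cdots x_n^{\alpha_n}y_0^{\beta_0}\cdots y_m^{\beta_m}$ of bidegree $(a,b)$ to $z_0^{\alpha_0+\beta_0}z_1^{\beta_1}\cdots z_m^{\beta_m}z_{m+1}^{\alpha_1}\cdots z_{n+m}^{\alpha_n}$, so that its image is the span of the degree-$(a+b)$ monomials $z_0^{c_0}\cdots z_{n+m}^{c_{n+m}}$ with $c_1+\cdots+c_m\leq b$ and $c_{m+1}+\cdots+c_{n+m}\leq a$; since $c_0+\cdots+c_{n+m}=a+b$, these two inequalities are equivalent to $c_0+c_1+\cdots+c_m\geq b$ and $c_0+c_{m+1}+\cdots+c_{n+m}\geq a$, i.e.\ to the monomial lying in $(z_0,\dots,z_m)^b\cap(z_0,z_{m+1},\dots,z_{n+m})^a$. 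Hence $\Phi$ is a linear isomorphism from $R_{(a,b)}$ onto $(I_{bH_1+aH_2})_{a+b}$. To bring in the double points, choose $P_1,\dots,P_s\in\mathbb{P}^{n+m}$ generic, so that they lie in $U$; then $q_i:=\psi(P_i)$ are $s$ generic points of $\mathbb{P}^n\times\mathbb{P}^m$, and every $s$-tuple of generic points of $\mathbb{P}^n\times\mathbb{P}^m$ arises this way. Trivialising $\mathcal{O}(a,b)$ over $V$ by dividing by $x_0^ay_0^b$, the form $F$ is singular at $q_i$ if and only if the regular function $F/(x_0^ay_0^b)$ vanishes to order $\geq2$ at $q_i$; pulling back along the isomorphism $\psi|_U$ and using $\psi^*(x_0^ay_0^b)=z_0^{a+b}$ and $\psi^*F=\Phi(F)$, this holds if and only if $\Phi(F)/z_0^{a+b}$ vanishes to order $\geq2$ at $P_i$, equivalently --- as $z_0$ is a unit near $P_i$ --- if and only if $\Phi(F)$ is singular at $P_i$. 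Combining this with the identification above (every element of the image of $\Phi$ already vanishes along $bH_1+aH_2$), $\Phi$ restricts to a linear isomorphism from $(I_Z)_{(a,b)}$ onto $(I_{bH_1+aH_2+2P_1+\cdots+2P_s})_{a+b}$, which is the first equality; the second is then immediate from Corollary~\ref{corTer}.

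I do not expect a genuine obstacle: this is the two-factor case of a classical reduction, and the only actual computation is the monomial bookkeeping above. The one point that must be handled with care is that $\psi$ has to transport the ``generic double point'' scheme structure faithfully, and this is exactly why everything is carried out over the chart $U$, where $\psi$ is a bona fide isomorphism and vanishing to order $2$ is preserved verbatim; one should also note, though it is immediate, that under this isomorphism of dense open subsets ``generic point of $\mathbb{P}^{n+m}$'' corresponds to ``generic point of $\mathbb{P}^n\times\mathbb{P}^m$''.
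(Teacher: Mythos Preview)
Your argument is correct. The paper itself does not supply a proof of this theorem: it is stated as a recalled result, with a pointer to \cite[Theorem~1.1]{CGG4} for the general statement. What you have written is precisely the standard ``affine--projective'' reduction that the theorem's label (\emph{metodoaffineproiettivo}) alludes to and that \cite{CGG4} carries out: one realises $\mathbb P^n\times\mathbb P^m$ birationally as the blow-up of $\mathbb P^{n+m}$ along two skew linear spaces $H_1,H_2$, observes that pulling back $\mathcal O(a,b)$ along the inverse map identifies $R_{(a,b)}$ with $(I_{bH_1+aH_2})_{a+b}$, and then uses that the birational map is an isomorphism on the affine chart $\{z_0\neq0\}$ to transport the double-point conditions. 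Your explicit monomial bookkeeping and the verification that singularity at $q_i$ corresponds to singularity at $P_i$ via the local trivialisation are exactly the points one needs, and they are handled cleanly. The only cosmetic remark is that the genericity of $(H_1,H_2)$ is used only to guarantee that they are skew and span a hyperplane, which you note; since the dimension on the right-hand side is constant on that open locus, your specialisation to coordinate subspaces is legitimate.
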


\begin{rem}\label{rem1} Since 
$\dim  (I_{bH_1+aH_2})_{a+b} ={n+a\choose n}  {m+b\choose m}$, then the expected dimension of $(I_{bH_1+aH_2+2P_1+ \cdots + 2P_s})_{a+b} $ is 
$$\max \left \{{n+a\choose n}  {m+b\choose m}- s(n+m+1); 0 \right  \}.$$
 It follows that 
$ \sigma_s \left ( X_{(n,m,a,b)}   \right ) $ is not defective if and only if the part of degree $a+b$ of the ideal
$  (I_{bH_1+aH_2+2P_1+ \cdots + 2P_s})$ has the expected dimension.

\end{rem}

Since we will make use of  Horace's differential lemma several times,
we recall it here  (for notation and proof we refer to \cite{AH00},
Section 2).

 \begin{lem} [{\bf Horace's differential lemma}] \label{Horacediff} 
Let $H \subseteq \mathbb P^t$ be a hyperplane, and let $P_1,...,P_r$ be generic points in $\mathbb P^ t$. 
\par
Let $Z+ 2P_1 + ... + 2P_r\subset \mathbb P^ t$ be a  scheme . Let
$R_1,...,R_r \in H$ be generic  points, and set
$$W= Res _H Z+ 2R_1|_H + ... + 2R_r|_H
\subset \mathbb P^t , $$ 
 $$T=Tr _H  Z+ R_1 + ... + R_r \subset \mathbb P^ {t-1} \simeq H.$$
If the following two conditions are satisfied:
\par
\noindent
${\bf Degue:} \hskip 1cm \dim (I_{W})_{d-1}=0,$
 \par
\noindent${\bf Dime:} \hskip 1.2cm \dim (I_{T})_{d}= 0,$\par
\noindent
 then 
 $$\dim (I_{Z+ 2P_1 + ... + 2P_r})_d=0 .$$
\end{lem}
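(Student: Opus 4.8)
\smallskip
\noindent\emph{Proof strategy.} The plan is to argue by a Horace-type specialization. Write $Y=Z+2P_1+\cdots +2P_r\subset\mathbb P^t$, with the $P_i$ generic. First I would reduce, using the upper semicontinuity of $\dim(I_{\bullet})_d$ in flat families, to producing a single degeneration in which the generic points $P_1,\dots ,P_r$ collapse onto generic points $R_1,\dots ,R_r$ of $H$ and showing that the resulting special scheme $\overline Y$ satisfies $\dim(I_{\overline Y})_d=0$: semicontinuity then forces $\dim(I_Y)_d=0$ for the original generic $Y$, which is the assertion. To bound $\dim(I_{\overline Y})_d$ I would use the exact sequence on $\mathbb P^t$ obtained by restriction to $H$,
\[
0\longrightarrow \mathcal I_{\mathrm{Res}_H\overline Y,\,\mathbb P^t}(d-1)\xrightarrow{\ \cdot H\ }\mathcal I_{\overline Y,\,\mathbb P^t}(d)\longrightarrow \mathcal I_{\mathrm{Tr}_H\overline Y,\,H}(d)\longrightarrow 0,
\]
whose cohomology sequence gives $\dim(I_{\overline Y})_d\le \dim(I_{\mathrm{Res}_H\overline Y})_{d-1}+\dim(I_{\mathrm{Tr}_H\overline Y})_d$.

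The goal is then to choose the degeneration so that the residual scheme of $\overline Y$ contains $W=\mathrm{Res}_H Z+2R_1|_H+\cdots +2R_r|_H$ (each collapsing double point depositing a whole double point of $H$ into the residue) while the trace of $\overline Y$ on $H$ contains $T=\mathrm{Tr}_H Z+R_1+\cdots +R_r$ (each double point leaving only a simple point on $H$); for such a $\overline Y$, hypothesis \textbf{Degue} makes the first summand above vanish and \textbf{Dime} the second, so $\dim(I_{\overline Y})_d=0$. This splitting is at least numerically forced: a double point of $\mathbb P^t$ has length $t+1$, which is exactly $\mathrm{length}(2R_i|_H)=t$ (a double point of $H\cong\mathbb P^{t-1}$) plus $\mathrm{length}(R_i)=1$, so once one insists the residue be fat at $R_i$ the trace there can only be a reduced point.

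The step I expect to be the genuine obstacle is the construction of such a $\overline Y$, and here I would follow the differential Horace method of Alexander--Hirschowitz \cite{AH00}. The point is that this residue/trace profile cannot be produced by a naive flat limit: when the centre $P_i$ moves onto $H$, the flat limit of $2P_i$ is just the double point $2R_i$ of $\mathbb P^t$, whose \emph{trace} on $H$ is the double point $2R_i|_H$ and whose \emph{residue} is only the simple point $R_i$ --- the reverse of what is wanted --- and in fact no subscheme of $\mathbb P^t$ supported on $H$ has reduced trace and a double point of $H$ as residue. The differential method circumvents this by carrying out the degeneration with first-order (``differential'') data, i.e.\ by working with the whole family $\{Y_s\}$ and a chosen direction of approach to $H$ rather than with a single special fibre, so that the estimate it yields for $\dim(I_Y)_d$ is governed by $W$ in degree $d-1$ and by $T$ in degree $d$. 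Granting that estimate, \textbf{Degue} and \textbf{Dime} finish the proof.
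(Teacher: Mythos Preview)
Your outline is correct and matches what the paper does: the paper itself gives no proof of this lemma, referring instead to \cite{AH00}, Section~2, and your sketch is precisely the structure of the Alexander--Hirschowitz differential Horace argument cited there --- semicontinuity, the residuation exact sequence, and the observation that the desired ``swapped'' trace/residue profile (reduced point on $H$, double point of $H$ in the residue) cannot be realized by a single flat limit and requires the first-order differential analysis. Your explicit remark that no subscheme supported on $H$ has this profile, and hence that one must work with the family rather than a special fibre, is exactly the point of the \emph{d\'eg\'en\'erescence/dime} mechanism; the paper takes all of this as a black box.
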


Now we recall the following useful lemma. It gives a criterion for adding to a scheme $X \subseteq \mathbb P^ N$ a set of reduced points lying on a linear space ${H} \subset \mathbb P ^N$ and imposing independent
conditions to forms of a given degree in the ideal of $X$ (see  \cite[Theorem 1.1]  {CGG4}).

 \begin{lem} \label{AggiungerePuntiSuSpazioLineare} Let $d \in \mathbb N$. Let $X  \subset \mathbb P^N$ be  a scheme, and
let $P_1,\dots,P_s$ be $s$ generic distinct points lying on a  linear space
$H \subset \mathbb P^N$.

If $\dim (I_{X })_d =s$ and $\dim (I_{X +H})_d =0$, then
$
\dim (I_{X+P_1+\cdots+P_s})_d = 0.$

\end{lem}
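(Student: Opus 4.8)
The plan is to argue by induction on $s$, proving the stronger statement that for generic points $P_1,\dots,P_s$ on $H$ one has
$$\dim (I_{X+P_1+\cdots+P_j})_d = s-j \qquad \text{for every } 0\le j\le s,$$
the case $j=s$ being the assertion of the lemma. The base case $j=0$ is exactly the hypothesis $\dim (I_X)_d = s$. For the inductive step I would use the elementary fact that imposing a single reduced point to a linear system lowers its (projective) dimension by at most one, and by exactly one precisely when the point does not lie in the common zero locus of the forms of the system. So, writing $Y=X+P_1+\cdots+P_j$, a point $P_{j+1}$ fails to impose an independent condition on $(I_Y)_d$ only if it lies in the common zero locus of $(I_Y)_d$; intersecting with $H$, a \emph{generic} point $P_{j+1}\in H$ imposes an independent condition unless every degree-$d$ form of $I_Y$ vanishes on $H$, i.e. unless $(I_Y)_d \subseteq (I_{Y+H})_d$.

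The key point is then that $(I_{Y+H})_d=(I_{X+H})_d=0$. Indeed, since each $P_i$ lies on $H$ we have ${\mathcal I}_H\subseteq {\mathcal I}_{P_i}$, hence
$$I_{Y+H}=I_X\cap {\mathcal I}_{P_1}\cap\cdots\cap {\mathcal I}_{P_j}\cap {\mathcal I}_H = I_X\cap {\mathcal I}_H = I_{X+H},$$
which has no nonzero element of degree $d$ by hypothesis. Consequently, as long as $\dim (I_Y)_d = s-j>0$, the subspace $(I_Y)_d$ is not contained in $(I_{Y+H})_d=0$, so the trace on $H$ of the common zero locus of $(I_Y)_d$ is a proper closed subset of $H$; a generic point $P_{j+1}$ of $H$ avoids it and therefore imposes an independent condition, giving $\dim (I_{Y+P_{j+1}})_d = s-j-1$. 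Here one uses that $P_1,\dots,P_s$ are generic as a tuple, so each $P_{j+1}$ is generic over the previously chosen ones. Iterating from $j=0$ down to $j=s$ yields $\dim (I_{X+P_1+\cdots+P_s})_d=0$.

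I do not expect a serious obstacle: the only two points needing care are the scheme-theoretic identity $Y+H=X+H$ used above, and the semicontinuity argument that a generic point of $H$ avoids the proper closed subset cut out on $H$ by $(I_Y)_d$ whenever $(I_Y)_d\neq 0$. Both are routine, and the whole argument is essentially a bookkeeping of how many independent conditions the $s$ collinear points impose once one knows that the degree-$d$ part of $I_{X+H}$ vanishes.
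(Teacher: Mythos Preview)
Your argument is correct and is precisely the standard proof of this fact: add the points one at a time, and observe that a generic point of $H$ can fail to impose an independent condition on $(I_Y)_d$ only if every form of $(I_Y)_d$ vanishes identically on $H$, which would force $(I_Y)_d=(I_{Y+H})_d=(I_{X+H})_d=0$. The identity $I_{Y+H}=I_{X+H}$ (because $P_1,\dots,P_j\in H$) and the semicontinuity/closed-locus remark are exactly the two points that need to be said, and you handle both.

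There is nothing to compare with in the paper itself: the lemma is only \emph{stated} there, with a reference to \cite{CGG4} for the proof, and no argument is reproduced. Your write-up is the natural one and matches the argument one finds in the cited source. One tiny cosmetic remark: when you write ``unless $(I_Y)_d\subseteq (I_{Y+H})_d$'', the inclusion $(I_{Y+H})_d\subseteq (I_Y)_d$ is automatic, so what you really mean (and use) is equality, i.e.\ $(I_Y)_d\subseteq (I_H)_d$; it might read more cleanly to phrase it that way.
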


\medskip Finally we recall a list of results, which we  will use to prove our main theorem
 (see Theorem \ref{mainteor}).
  We start with a result due to  M. V. Catalisano, A. V. Germita and A. Gimigliano  about  the higher secant varieties of $\mathbb P^1\times \mathbb P^1$:

 \begin{thm}   \cite  [Corollary 2.3] {CGG4}  \label{teoremaCatGerGim}   Let  $a,b,d,s$ be positive integers, then
 $ \sigma_s(X_{(1,1,a,b)} ) $
  has the expected dimension
  except for 
$$
(a,b)=(2,2d) \ ({\it or} \  (a,b)=(2d,2)) \ 
\ \ {\it and} \ \ s=2d+1 \ .
$$
In this cases $$\dim \sigma_s(X_{(1,1,a,b)} ) =3s-2$$
and the $s${\it-defect} of $X_{(1,1,a,b)} $ is 1.
 
 \end{thm}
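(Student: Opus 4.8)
The plan is to apply the translation machinery from Section~\ref{prelim}, reducing the statement about $\sigma_s(X_{(1,1,a,b)})$ to a cohomological computation on a $2$-dimensional scheme. By Corollary~\ref{corTer} and Theorem~\ref{metodoaffineproiettivo}, computing $\dim\sigma_s(X_{(1,1,a,b)})$ amounts to computing $\dim(I_{bH_1+aH_2+2P_1+\cdots+2P_s})_{a+b}$ in $\mathbb P^2$, where $H_1,H_2$ are generic lines and $P_1,\dots,P_s$ are generic points; by Remark~\ref{rem1} non-defectivity is equivalent to this ideal having the expected dimension $\max\{(a+1)(b+1)-3s;\,0\}$. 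So the whole statement becomes: this dimension is as expected except when $\{a,b\}=\{2,2d\}$ and $s=2d+1$, in which case it is $3s-2-(3s-3)=1$ more than expected (i.e.\ one too many forms survive), and otherwise it vanishes for all $s\geq q^*$ and equals $(a+1)(b+1)-3s$ for $s\leq e$.

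The key steps, in order: (1) Without loss of generality assume $a\leq b$ (the Segre--Veronese variety is symmetric in the two $\mathbb P^1$ factors). (2) Treat the multiplication-by-$H_1$ exact sequences: since $H_1$ is a line in $\mathbb P^2$ and the scheme contains $bH_1$, peeling off $H_1$ repeatedly via $0\to\mathcal I_{\mathrm{Res}}(-1)\to\mathcal I(\,\cdot\,)\to\mathcal I_{\mathrm{Tr}}\to 0$ reduces the count; each restriction to $H_1\cong\mathbb P^1$ traces out a $0$-dimensional subscheme, and one shows by a dimension count on $\mathbb P^1$ that these restrictions impose independent conditions for the relevant degrees, so that the problem collapses to $\dim(I_{aH_2+2P_1+\cdots+2P_s})_{a}$-type statements where $H_2$ has been absorbed similarly. (3) Run the induction on $a$ (and on $s$) using Horace's differential Lemma~\ref{Horacediff}: specialize some of the double points $2P_i$ onto the line $H$, replacing each by a $2$-jet $2R_i|_H$ in the residual and a simple point $R_i$ in the trace, check the two numerical conditions \textbf{Degue} and \textbf{Dime}, and thereby drop to smaller parameters. (4) Handle the finitely many small base cases of the induction by hand, and in particular isolate the genuinely exceptional family $(a,b)=(2,2d)$, $s=2d+1$: here $(a+1)(b+1)=3(2d+1)=3s$, so the expected dimension is $0$, but a direct argument (the residual scheme after removing $H_2$ twice is $dH_1+2P_1+\cdots+2P_{2d+1}$ in $\mathbb P^2$ with $2d+1$ double points in degree $2d$, which is exactly the classical $(2d+1)$-fold point defect on a conic / the Veronese $\nu_{2d}$-type obstruction) shows one section persists, giving defect $1$ and $\dim\sigma_s=3s-2$.

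I expect the main obstacle to be step~(3)/(4): controlling the Horace induction uniformly in $a$ while correctly tracking when the conic $\{a=2\}$ produces its anomalous surviving section. The delicate point is that when $a=2$ the linear system $|\mathcal O(a)|=|\mathcal O(2)|$ on $\mathbb P^2$ is the conic system, and imposing double points interacts with the line $H_1$ of multiplicity $b=2d$ in a way that is not captured by naive parameter subtraction — one must either invoke the $\mathbb P^1\times\mathbb P^1$ geometry directly (the quadric surface, where $2d+1$ generic double points in bidegree $(2,2d)$ fail by one because any such form is divisible by the degree-$2$ part, cf.\ the structure of $S^2V_1$) or push the induction base far enough that Theorem~\ref{teoremaCatGerGim}'s own exceptional list emerges. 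Since the statement being proved \emph{is} Theorem~\ref{teoremaCatGerGim}, in practice one would cite the original argument of Catalisano--Geramita--Gimigliano rather than reprove it; for the present paper its role is as an external input, so the ``proof'' here is a pointer to \cite{CGG4} together with the observation that the defective case and its defect value are exactly as recorded.
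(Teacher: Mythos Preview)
The paper does not prove this statement at all: Theorem~\ref{teoremaCatGerGim} is quoted verbatim from \cite[Corollary~2.3]{CGG4} as one of the external inputs listed at the end of Section~\ref{prelim}, with no argument supplied. You recognize this yourself in the final sentence of your proposal, and that sentence alone is the correct ``proof'' here---everything preceding it (the Horace induction sketch, the peeling of $H_1$, the analysis of the $a=2$ case) is surplus to what the paper does and is not required.
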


  K. Baur and J. Draisma, by a tropical approach,
studied the case $\mathbb P^2\times \mathbb P^1$:
 
  \begin{thm}  \cite  [Theorem 1.3] {BD} \label{teoremaBaurDraisma}   Let  $a,b,d,s$ be positive integers, then
 $ \sigma_s(X_{(2,1,a,b)} ) $
  has the expected dimension
  except for 
  $$
(a,b)=(3,1) 
\ \ {\it and} \ \ s=5 \ ;
$$
$$
(a,b)=(2,2d) \
\ \ {\it and} \ \ s=3d+1 \ \   {\it or }  \ \  s=3d+2\ .
$$
In this cases  the $s${\it-defect} of $X_{(2,1,a,b)} $ is 1.
 \end{thm}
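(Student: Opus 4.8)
The plan is to prove Theorem~\ref{teoremaBaurDraisma} by the classical Horace method for linear systems of fat points — the same circle of ideas used to prove Theorem~\ref{mainteor} — rather than by the tropical route of \cite{BD}. The first step is the translation of the statement. By Corollary~\ref{corTer} and Theorem~\ref{metodoaffineproiettivo} with $n=2$, $m=1$, the variety $\sigma_s(X_{(2,1,a,b)})$ is non-defective if and only if
$$\dim\bigl(I_{\,bL+aP+2P_1+\cdots+2P_s}\bigr)_{a+b}=\max\Bigl\{\binom{a+2}{2}(b+1)-4s,\ 0\Bigr\}$$
in $K[x_0,x_1,x_2,x_3]$, where $L\subset\mathbb P^3$ is a general line, $P\in\mathbb P^3$ a general point, and $P_1,\dots,P_s\in\mathbb P^3$ general points; in the two exceptional families one instead has to show this dimension is exactly one larger. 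Since, with $D=\binom{a+2}{2}(b+1)$, the function $s\mapsto\dim\bigl(I_{\,bL+aP+2P_1+\cdots+2P_s}\bigr)_{a+b}$ is non-increasing and drops by at most $n+m+1=4$ at each step (cf.\ Remark~\ref{rem1}), it suffices to verify the claim at $s=q_{(2,1,a,b)}$ and $s=q^{*}_{(2,1,a,b)}$ in the non-exceptional rows, and at the two or three consecutive values of $s$ around the critical one in each exceptional row.

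The core is a double induction on the pair $(a,b)$ carried out by Horace's differential Lemma~\ref{Horacediff}. For the inductive step I fix a plane $H\cong\mathbb P^2\subset\mathbb P^3$ in special position: if $H\supset L$ then $\mathrm{Res}_H(bL)=(b-1)L$, so the line-multiplicity appearing in the Degue system drops from $b$ to $b-1$, while $\mathrm{Tr}_H(bL)$ is the divisor $bL$ of $\mathbb P^2$ and a factor $\ell_L^{\,b}$ splits off, turning the Dime system into a plane problem of degree $a$; if instead $H\ni P$ then $\mathrm{Res}_H(aP)=(a-1)P$ and $\mathrm{Tr}_H(aP)=aP$ is a fat point of $\mathbb P^2$, so the point-multiplicity drops from $a$ to $a-1$. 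One specialises an appropriate number $k$ of the double points onto $H$ — $k$ chosen so that the expected dimensions on both sides vanish, which is the numerical bookkeeping part — and applies Lemma~\ref{Horacediff}: the \emph{Degue} condition is the analogous vanishing in $\mathbb P^3$ with strictly smaller $a+b$, hence the inductive hypothesis (the induction being run, as usual, on a class of schemes slightly larger than unions of double points, so as to accommodate the residual schemes $2R_i|_H$ produced by the differential trick); the \emph{Dime} condition is a problem about fat points in $\mathbb P^2$ — a general union of double points together with one or two ``heavy'' fat points coming from $L$ and $P$ and a few simple points — which is settled by the Alexander--Hirschowitz classification \cite{AH95}, the $\mathbb P^1\times\mathbb P^1$ computation of Theorem~\ref{teoremaCatGerGim} (itself equivalent, via Theorem~\ref{metodoaffineproiettivo}, to such a plane statement), and Lemma~\ref{AggiungerePuntiSuSpazioLineare}. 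The base of the induction ($a=1$, or $b$ small) reduces $X_{(2,1,1,b)}$ and $X_{(2,1,a,1)}$ to bi-degrees containing a $1$ — handled by the bi-degree $(1,d)$ analysis of \cite{BCC} — and, where convenient, to $\mathbb P^1\times\mathbb P^1$ through Theorem~\ref{teoremaCatGerGim}.

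The genuinely delicate part — and the one I expect to absorb nearly all the work — is the analysis of the two exceptional families, where both the lower bound (exhibit the extra form) and the matching upper bound (no further freedom) are needed. For $(a,b)=(2,2d)$ the pathology is exactly that of $\mathbb P^1\times\mathbb P^1$ in bi-degree $(2,2d)$ from Theorem~\ref{teoremaCatGerGim}: a ``lower $a$'' step for $\mathbb P^2\times\mathbb P^1$, upon reaching $a=2$, feeds the Dime system into precisely the defective plane configuration underlying Theorem~\ref{teoremaCatGerGim}, so at $s=3d+1$ one meets the tell-tale one-dimensional space of bad forms, and one must show that this extra section is neither absorbed by the residual scheme nor amplified, and is killed precisely when the $(3d+2)$-nd double point is added. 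For $(a,b)=(3,1)$ the cleanest formulation is through the Grassmann-defectivity dictionary of Section~\ref{Gd}: $\sigma_5(X_{(2,1,3,1)})$ is defective precisely because $\nu_3(\mathbb P^2)$ is $(1,4)$-defective, equivalently because there is a \emph{unique} quartic surface of $\mathbb P^3$ containing a general line, of multiplicity $3$ at a general point, and singular at five further general points; one has to prove this uniqueness, check that six general double points admit no such quartic, and conclude that $\delta_4=1$. Once the correct specialisation numbers $k$ are pinned down I expect the generic induction to be routine, with all the real content living in these two special configurations — the $(3,1)$ entry being, via the translation of Section~\ref{Gd} and Corollary~\ref{cor1}, exactly the classical $(1,4)$-defectivity of $\nu_3(\mathbb P^2)$.
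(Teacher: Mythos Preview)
The paper does \emph{not} prove Theorem~\ref{teoremaBaurDraisma}: it is quoted from \cite{BD} and used purely as the base case $n=2$ of the induction in Theorem~\ref{mainteor}. So there is nothing in the paper to compare your argument against; what you are proposing is an independent proof, replacing the tropical methods of \cite{BD} by the Horace machinery of Section~2. In spirit this amounts to running Lemmas~\ref{lemma1}--\ref{lemma3} at $n=2$, with the $a\le3$ rows supplied by Theorems~\ref{teoremaChiantiniCil}, \ref{teoremaAbrescia1}, \ref{teoremaAbrescia2} and the $b=1$ column by \cite{BCC}. That is a legitimate strategy and is indeed close to how the paper handles $n\ge3$, so the comparison is: the paper outsources $n=2$ entirely, while you would reprove it by the same Horace scheme the paper uses one dimension up.

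As written, however, what you have is a plan rather than a proof. Two concrete points. First, the ``numerical bookkeeping'' you dismiss is not automatic at $n=2$: if you try the step that lowers $a$ (hyperplane through the fat point $H_2$) at $(n,a,b)=(2,4,1)$, condition~(3*) of Lemma~\ref{lemma3} requires $e_{(2,1,3,1)}\ge q^{*}_{(2,1,4,1)}-q_{(1,1,4,1)}=8-3=5$, whereas $e_{(2,1,3,1)}=4$ precisely because $(2,1,3,1)$ is defective. Your escape hatch is the separate $b=1$ base case via \cite{BCC}; that is fine, but you must make explicit that the induction only starts at $b\ge2$, and you should check (as one can) that for $n=2$, $a\ge4$, $b\ge2$ the inequalities~(1),~(3*),~(4) do hold, using $e^{*}_{(2,1,2,2d)}=3(d+1)$ from Theorem~\ref{teoremaAbrescia1} in the even-$b$ column. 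Second, your treatment of the two exceptional families is purely descriptive: you correctly identify \emph{why} one expects a defect (the $(2,2d)$ case inherits the $\mathbb P^1\times\mathbb P^1$ pathology, the $(3,1)$ case is the classical $(1,4)$-defect of $\nu_3(\mathbb P^2)$ via Proposition~\ref{CarlaClaudio}), but you neither exhibit the surplus form nor prove the matching upper bound, and invoking Corollary~\ref{cor1} at the end would be circular, since that corollary is deduced \emph{from} Theorem~\ref{mainteor}, which in turn rests on Theorem~\ref{teoremaBaurDraisma}.
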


 L. Chiantini e C. Ciliberto in \cite{ChCi} studied the Grassmannians of secant varieties of irreducible curves  and they show that these varieties always have the expected dimension.  As an immediate consequence of  their \cite[Theorem 3.8]{ChCi} and of a classical result about  Grassmann secant varieties
 (see Proposition \ref {CarlaClaudio}) we immediately get  the regularity of the higher secant varieties of $\mathbb P^n\times \mathbb P^1$, embedded with divisors of type $(1,b)$ (see also \cite[Theorem 3.5]{AB09}):
 
 \begin{thm}\label{teoremaChiantiniCil} 
 Let  $n,b,s$ be positive integers, then
 $ \sigma_s(X_{(n,1,1,b)} ) $
  has always the expected dimension.

\end{thm}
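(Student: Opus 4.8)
The idea is to recognise $X_{(n,1,1,b)}$ as a Segre product and then to read the statement through the dictionary between secant varieties of such products and Grassmann secant varieties, which for curves is controlled by the theorem of Chiantini and Ciliberto. Concretely, embedding the $\mathbb{P}^1$ factor by $\mathcal{O}(b)$ is the Veronese map $\nu_b$, so $X_{(n,1,1,b)}$ is exactly the Segre embedding of $\mathbb{P}^n\times C$, where $C=\nu_b(\mathbb{P}^1)\subset\mathbb{P}^b$ is the rational normal curve of degree $b$; here $\mathbb{P}^N=\mathbb{P}(V\otimes S^bW)$ with $\dim V=n+1$, $\dim W=2$, and $N=(n+1)(b+1)-1$.

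First I would invoke the classical correspondence recalled in Proposition \ref{CarlaClaudio}. A general point of $\sigma_s(\mathbb{P}^n\times C)$ has the form $\sum_{i=1}^s v_i\otimes y_i$ with $v_i\in V$ and $y_1,\dots,y_s$ general points on the cone over $C$; the $y_i$ span a general $(s-1)$-secant $\mathbb{P}^{s-1}$ of $C$, and $\sum_i v_i\otimes y_i$ lies in $V$ tensored with that span. Thus such a point is encoded by a linear subspace of dimension $\min\{n+1,s\}$ contained in an $(s-1)$-secant $\mathbb{P}^{s-1}$ of $C$, together with a point of the corresponding tensor space; a parameter count (this is precisely the content of the cited Proposition) gives that $\sigma_s(\mathbb{P}^n\times C)$ attains the expected dimension $\min\{N,\,s(n+2)-1\}$ \emph{if and only if} $C$ is not $(n,s-1)$-Grassmann defective, and similarly $\sigma_s$ fills $\mathbb{P}^N$ throughout the range where this is expected.

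Then I would apply \cite[Theorem 3.8]{ChCi}: every irreducible, non-degenerate curve — in particular the rational normal curve $C=\nu_b(\mathbb{P}^1)$ — is $(k,s-1)$-non-defective for all $k$ and all $s$. Taking $k=n$ in the equivalence above yields at once that $\sigma_s(X_{(n,1,1,b)})$ has the expected dimension for every positive integer $s$, which is the assertion. (The same statement is obtained by a different route in \cite[Theorem 3.5]{AB09}.)

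There is no genuinely hard step: the proof is a translation followed by a citation. The only point demanding a little care is to check that the expected dimension of the Grassmann secant variety $\mbox{Sec}_{n,s-1}(C)$ as it enters Proposition \ref{CarlaClaudio} really matches $\min\{N,\,s(n+2)-1\}$ from Definition \ref{secant}, and that the regime in which $\sigma_s(X_{(n,1,1,b)})=\mathbb{P}^N$ is covered by the same correspondence; both are routine once the identification $X_{(n,1,1,b)}=\mathbb{P}^n\times\nu_b(\mathbb{P}^1)$ is made explicit.
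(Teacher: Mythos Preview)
Your proposal is correct and follows exactly the route indicated in the paper: identify $X_{(n,1,1,b)}$ with the Segre image of $\mathbb{P}^n\times\nu_b(\mathbb{P}^1)$, apply Proposition~\ref{CarlaClaudio} with $k=n$ and $X=\nu_b(\mathbb{P}^1)$, and then invoke \cite[Theorem~3.8]{ChCi} to conclude that the rational normal curve is never $(n,s-1)$-Grassmann defective. The paper presents this theorem precisely as an immediate consequence of those two ingredients, so there is nothing to add.
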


S. Abrescia studied the cases  $\mathbb P^n\times \mathbb P^1$, embedded with divisors of type  $(2,b)$ and $(3,b)$:

 \begin{thm}   \cite[Proposition 3.1 and Theorem 3.4] {Abrescia}  \label{teoremaAbrescia1} 
 Let  $n,b,d,s$ be positive integers, then
 $ \sigma_s(X_{(n,1,2,b)} ) $
  has the expected dimension
  except for 
$$
b=2d \
\ \ {\it and} \ \ d(n+1)+1 \leq s \leq (d+1)(n+1)-1 .
$$
In this cases $\sigma_s(X_{(n,1,2,b)} )$ is defective, and 
its defectiveness is $\delta_s={d(n+1)\choose 2}+{s+1 \choose 2}-sd(n+1)$.

\end{thm}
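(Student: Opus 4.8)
The idea is to recast the statement as a postulation problem for a (linear) fat--point scheme and then prove it by induction on $n$ and $b$, the inductive step being Horace's differential lemma. By Theorem~\ref{metodoaffineproiettivo} applied with $m=1$ and $a=2$, and by Remark~\ref{rem1}, it suffices to show that the scheme
$$
Y\;=\;bH_1+2H_2+2P_1+\cdots+2P_s\ \subset\ \mathbb P^{n+1},
$$
where $H_1$ is a generic linear space of codimension $2$, $H_2$ is a generic point and $P_1,\dots,P_s$ are generic points, satisfies
$$
\dim(I_Y)_{b+2}\;=\;\max\Big\{\binom{n+2}{2}(b+1)-s(n+2),\ 0\Big\}\;+\;\delta_s ,
$$
with $\delta_s=0$ unless $b=2d$ and $d(n+1)+1\le s\le(d+1)(n+1)-1$, in which range $\delta_s=\binom{s-d(n+1)+1}{2}$; a direct computation shows this equals $\binom{d(n+1)}{2}+\binom{s+1}{2}-sd(n+1)$, the defect claimed in the statement. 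For $b$ odd the displayed defect vanishes, so there the assertion is a pure non-defectivity statement; the case $b=2d$ is the one that carries the defect.

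\noindent\textbf{Origin of the defect.} First I would isolate the geometric reason for the exceptional behaviour. Restricting $Y$ to a generic hyperplane $H\subset\mathbb P^{n+1}$ and iterating, the trace on $H$ of a suitable residual of $Y$ is, after dividing out the appropriate power of the linear form cutting the restriction of $H_1$, a linear system of the form ``quadrics of $\mathbb P^n$ vanishing doubly at $t$ generic points''. Identifying quadrics with symmetric $(n+1)\times(n+1)$ matrices and requiring that they kill the span of the $t$ points, one sees this system has dimension $\binom{n+2-t}{2}$ for $t\le n+1$ --- precisely the classical defectivity of $\sigma_t(\nu_2(\mathbb P^n))$. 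This accounts for the shape of the critical range $d(n+1)<s<(d+1)(n+1)$ and yields the lower bound $\dim(I_Y)_{b+2}\ge(\text{expected})+\delta_s$ on that range. Concretely, put $r=s-d(n+1)$ and let $V$ be the space of forms of degree $d+1$ that vanish to order $d$ on $H_1$ and simply at $H_2,P_1,\dots,P_s$; a monomial count gives $\dim V=n+1-r$, the system $V$ has no fixed component other than $H_1$, so the multiplication map $S^2V\to(I_Y)_{b+2}$ is injective and $\dim(I_Y)_{b+2}\ge\binom{n+2-r}{2}$. When $\binom{n+2}{2}-r(n+2)\ge 0$ this already equals $(\text{expected})+\delta_s$; for the remaining values of $r$ one enlarges the subsystem with products whose factors vanish at proper subsets of the $P_i$ --- alternatively the entire lower bound can be transported along the induction below, starting from the cases $n\le 2$.

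\noindent\textbf{The induction.} For the matching upper bound (hence also for the non-defective cases) I would induct on $n$ and on $b$, with base cases $n=1$ (Theorem~\ref{teoremaCatGerGim}), $n=2$ (Theorem~\ref{teoremaBaurDraisma}) and small $b$, for which the residual produced below reduces to the bidegree $(1,b)$ case and is non-defective by Theorem~\ref{teoremaChiantiniCil}. In the inductive step I add to $Y$ exactly $\max\{\binom{n+2}{2}(b+1)-s(n+2),0\}+\delta_s$ generic auxiliary points, to be placed on a suitable linear subspace so that Lemma~\ref{AggiungerePuntiSuSpazioLineare} applies in the subsequent restriction, reducing the claim to showing $\dim(I_{Y'})_{b+2}=0$ for the enlarged scheme $Y'$. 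Then I choose a generic hyperplane $H$ and invoke Lemma~\ref{Horacediff}, degenerating an appropriate number of double points of $Y'$ onto $H$ --- each contributing a hyperplane-supported double point to $\operatorname{Res}_H Y'$ and a simple point to $\operatorname{Tr}_H Y'$ --- and dividing the adjoined points between $H$ and $\mathbb P^{n+1}$. Taking $H\supset H_1$ lowers the multiplicity of $H_1$ to $b-1$, so the residual reduces to the bidegree $(2,b-1)$ case, of the opposite parity and hence non-defective for the relevant $s$ by induction, while the trace, after factoring out a power of the restricted linear form, is a system of quadrics through simple points of $\mathbb P^n$; taking instead $H\not\supset H_1$ turns the trace into the bidegree $(2,b)$ case on $\mathbb P^{n-1}\times\mathbb P^1$ and the residual into a bidegree $(1,b)$-type problem handled by Theorem~\ref{teoremaChiantiniCil}. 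In each configuration the counts are arranged so that the hypotheses \textbf{Degue} ($\dim(I_W)_{b+1}=0$) and \textbf{Dime} ($\dim(I_T)_{b+2}=0$) of Lemma~\ref{Horacediff} follow from the inductive hypothesis, from Theorem~\ref{teoremaChiantiniCil}, and from the quadric count of the previous step.

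\noindent\textbf{The main obstacle.} The hard part will be the numerology near the two endpoints $s=d(n+1)+1$ and $s=(d+1)(n+1)-1$ of the exceptional range: there the number of double points one can afford to degenerate onto $H$, the number of points added via Lemma~\ref{AggiungerePuntiSuSpazioLineare}, and the position of the residual relative to its own exceptional range must all be balanced exactly, so each near-critical value of $s$ tends to need a small ad hoc argument. A second difficulty is to certify the exact value of $\delta_s$ and not merely an inequality: one has to verify that the $\binom{s-d(n+1)+1}{2}$ extra forms produced in the second step survive the degeneration used in Horace's lemma --- a semicontinuity argument that must be set up carefully --- and the base case $b=2$, where the defectivity of $\nu_2(\mathbb P^n)$ appears undiluted, has to be treated by hand.
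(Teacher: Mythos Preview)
The paper does not prove Theorem~\ref{teoremaAbrescia1} at all: it is quoted from \cite{Abrescia} (Proposition~3.1 and Theorem~3.4 there) and used as a black-box input to Theorem~\ref{mainteor}. So there is no ``paper's own proof'' to compare against; you are attempting an independent proof using the machinery that the paper assembles for a different purpose.

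Your architecture is reasonable and in the spirit of the paper: translate via Theorem~\ref{metodoaffineproiettivo}, trace the defect to the classical defectivity of $\nu_2(\mathbb P^n)$, and induct with Lemma~\ref{Horacediff} from the base cases $n=1,2$ (Theorems~\ref{teoremaCatGerGim}, \ref{teoremaBaurDraisma}) and $a=1$ (Theorem~\ref{teoremaChiantiniCil}). The identification $\delta_s=\binom{r+1}{2}$ with $r=s-d(n+1)$ is correct, and your observation that $\binom{n+2-r}{2}=(\text{expected})+\delta_s$ exactly when the expected value is positive is right.

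There is, however, a genuine gap in your lower-bound step. The implication ``$V$ has no fixed component other than $H_1$, hence $S^2V\hookrightarrow (I_Y)_{b+2}$ is injective'' is false as a general principle: a fixed-component-free linear system can have quadratic relations (e.g.\ $V=\langle x^2,xy,y^2\rangle$ satisfies $(x^2)(y^2)=(xy)^2$). In the bigraded picture your $V$ sits inside $H^0(\mathcal O_{\mathbb P^n\times\mathbb P^1}(1,d))$, and already the full symmetric multiplication $S^2H^0(\mathcal O(1,d))\to H^0(\mathcal O(2,2d))$ has nontrivial kernel (its dimension is $\binom{n+1}{2}\binom{d+1}{2}+\binom{n+2}{2}\binom{d}{2}$), so injectivity on your particular subspace requires a real argument, not the one you give. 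Moreover, even granting injectivity, you correctly note that $\binom{n+2-r}{2}$ falls short of $\delta_s=\binom{r+1}{2}$ once $r>(n+1)/2$; your proposed fix (``enlarge the subsystem with products whose factors vanish at proper subsets of the $P_i$'') is exactly where the work lies and is not carried out. Your alternative---transport the lower bound along the Horace induction from $n\le 2$---is the sounder route, but then the entire content of the proof resides in the bookkeeping of that induction near the endpoints of the exceptional range, which you explicitly leave undone. So what you have is a plausible plan with one incorrect justification and the hard step deferred, not yet a proof.
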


 \begin{thm}  \cite[Theorem 4.2] {Abrescia}  \label{teoremaAbrescia2} 
 Let  $n,b,s$ be positive integers, then
 $ \sigma_s(X_{(n,1,3,b)} ) $
  has the expected dimension
  except for 
$$
n=2 , \ \  b=1 
\ \ {\it and} \ \ s=5 .
$$
In this case $\sigma_s(X_{(2,1,3,1)} )$ is defective, and 
its defectiveness is 1.

\end{thm}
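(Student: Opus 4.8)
The statement is the quoted Theorem~\ref{teoremaAbrescia2}; here is the line of attack I would take.

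\textbf{Reduction.} By Corollary~\ref{corTer}, Theorem~\ref{metodoaffineproiettivo} and Remark~\ref{rem1}, the assertion is equivalent to the following. Write $q=q_{(n,1,3,b)}$, $q^{*}=q^{*}_{(n,1,3,b)}$, $r=r_{(n,1,3,b)}$, and let $H_1\subset\mathbb P^{n+1}$ be a general $(n-1)$-plane, $H_2\in\mathbb P^{n+1}$ a general point, $P_1,\dots,P_s\in\mathbb P^{n+1}$ general points. Then the scheme $Y_{n,b,s}:=bH_1+3H_2+2P_1+\cdots+2P_s$ satisfies
$$\dim (I_{Y_{n,b,s}})_{b+3}=\max\Big\{\binom{n+3}{3}(b+1)-s(n+2),\,0\Big\},$$
except when $(n,b,s)=(2,1,5)$. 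By semicontinuity it is enough to verify this at the two critical values $s=q$ and $s=q^{*}$; using $\dim (I_{bH_1+3H_2})_{b+3}=\binom{n+3}{3}(b+1)=q(n+2)+r$, this means showing that $q^{*}$ general double points kill $(I_{bH_1+3H_2})_{b+3}$ and, when $r>0$, that $q$ general double points impose independent conditions on it.

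\textbf{Induction and base cases.} I would induct on $b$. For $n=1$ the variety $X_{(1,1,3,b)}$ is never defective by Theorem~\ref{teoremaCatGerGim} (no excluded bidegree $(2,2d)$ or $(2d,2)$ is of the form $(3,b)$); for $n=2$, Theorem~\ref{teoremaBaurDraisma} gives non-defectiveness of $X_{(2,1,3,b)}$ with the unique exception $(b,s)=(1,5)$, which is exactly the exception of the statement. Hence one may assume $n\ge 3$, and then settle the starting value $b=1$ by a parallel induction on $n$ whose residual strata are triple-point-plus-double-point systems in degree $3$, i.e.\ of Alexander--Hirschowitz type; alternatively one runs the $b$-induction from above and disposes of $b\le 2$ directly.

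\textbf{Inductive step.} Fix a general hyperplane $H\supset H_1$ with $H_2\notin H$. Then $\mathrm{Res}_H(bH_1)=(b-1)H_1$, while $\mathrm{Tr}_H(bH_1)$ is a $b$-fold hyperplane of $H\cong\mathbb P^{n}$, and $3H_2$ together with the double points off $H$ are unaffected by $\mathrm{Res}_H$. Degenerating a suitable number $k$ of the double points onto $H$ in the differential sense of Lemma~\ref{Horacediff}, the trace is, in $\mathbb P^{n}$ and degree $b+3$, that $b$-fold hyperplane together with $k$ general points; since a degree-$(b+3)$ form vanishing $b$ times on a hyperplane $\{L=0\}$ is $L^{b}$ times a cubic, the \textbf{Dime} condition reduces to $k$ general points killing the cubics of $\mathbb P^{n}$, hence holds once $k\ge\binom{n+3}{3}$. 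The residual scheme is $(b-1)H_1+3H_2$ plus $q^{*}-k$ general double points plus $k$ schemes $2R_i|_H$ (and the simple points), to be tested in degree $b+2$ — the shape of the case $(3,b-1)$ with extra imposed conditions — so the \textbf{Degue} condition follows from the inductive hypothesis on $b$, provided the count leaves enough independent conditions on the residual. Because the crude split $k=\binom{n+3}{3}$, $q^{*}-k\ge q^{*}_{(n,1,3,b-1)}$ overshoots the available $q^{*}$, in practice one distributes the double points over two suitable hyperplanes, or combines a differential Horace move with an ordinary one and with Lemma~\ref{AggiungerePuntiSuSpazioLineare} for the leftover simple points, and one feeds Theorem~\ref{teoremaAbrescia1} to the auxiliary bidegree-$(2,b)$ strata and Alexander--Hirschowitz \cite{AH95} to the cubic strata that appear.

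\textbf{Main obstacle.} The whole difficulty is numerical: one must choose how many double points to specialize, and onto which linear spaces, so that the Euclidean remainders of $\binom{n+3}{3}(b+1)$, $\binom{n+2}{3}b$ and the relevant cubic counts fall into place simultaneously at $s=q$ and $s=q^{*}$. This cannot be arranged uniformly near the bottom of the induction, so one is left with a short list of small triples $(n,b,s)$ clustering around $(2,1,5)$ that must be checked by hand, and that check is exactly where the single genuine exception is isolated. The remaining subtlety is to verify that the non-reduced residual schemes $2R_i|_H$ impose independent conditions, so that the numerical bound is actually attained — which is precisely why the differential form of Horace, rather than a naive specialization, is required.
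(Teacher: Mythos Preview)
The paper does not prove Theorem~\ref{teoremaAbrescia2}: it is quoted from \cite[Theorem~4.2]{Abrescia} in Section~\ref{prelim} as a known preliminary result, and then used as a black-box input (together with Theorems~\ref{teoremaCatGerGim}--\ref{teoremaAbrescia1}) to start the induction on $n+a$ in the proof of Theorem~\ref{mainteor}. There is therefore no proof in this paper to compare your proposal against.

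Taken on its own terms, your sketch has gaps you yourself acknowledge: the crude choice $k=\binom{n+3}{3}$ overshoots the available $q^{*}$, the promised two-hyperplane redistribution is never carried out, the base $b=1$ for $n\ge3$ is deferred to an unexecuted ``parallel induction on $n$,'' and the ``short list of small triples'' to be checked by hand is never written down. If one wanted to reprove Abrescia's theorem with the machinery of the present paper, the natural route is not your induction on $b$ but rather Lemma~\ref{lemma3} with $a=3$ and induction on $n$: condition~(1) is then the inductive hypothesis on $n$, condition~(3*) involves $e_{(n,1,2,b)}$ (controlled by Theorem~\ref{teoremaAbrescia1}), and condition~(4) involves $e^{*}_{(n,1,1,b)}$ (controlled by Theorem~\ref{teoremaChiantiniCil}). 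This reduces $a=3$ to the already-established $a=1,2$ cases rather than to the $(3,b-1)$ case --- though one must still verify the numerical inequalities, in particular~(3*) when $b$ is even and the $(2,b)$ strata are defective, and handle the base $n\le2$ via Theorems~\ref{teoremaCatGerGim} and~\ref{teoremaBaurDraisma}.
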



  \section{Technical lemmata}

This section contains several technical lemma  which will be used in the proof of the main theorem in the next section. From now on we consider only the case $m=1$,
that means that we focus our attention on the Segre-Veronese variety $X_{(n,1,a,b)}$.


 \begin{lem}\label{lemma1} Notation as in \ref{SV}, let $n,a,b$ be positive integers,   $n \geq 2$, $a \geq 2$,  $s \leq q_{(n,1,a,b)}$. If

{\rm (1) } $q_{(n-1,1,a,b)} = e_{(n-1,1,a,b)}$;

{\rm (2) }  $q_{(n,1,a,b)} \geq q_{(n-1,1,a,b)}+r_{(n-1,1,a,b)}$;

{\rm (3) }  $e_{(n,1,a-1,b)} \geq q_{(n,1,a,b)}-q_{(n-1,1,a,b)}$;

{\rm (4) }  $e^*_{(n,1,a-2,b)} \leq q_{(n,1,a,b)}-q_{(n-1,1,a,b)}-r_{(n-1,1,a,b)}$;

then $ \sigma_s(X_{(n,1,a,b)} ) $ has the expected dimension, that is,
$$\dim\sigma_s(X_{(n,1,a,b)} ) =s(n+2)-1.
$$

\end{lem}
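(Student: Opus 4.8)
The plan is to use the Castelnuovo-type inductive machinery encoded in Theorem~\ref{metodoaffineproiettivo} together with Horace's differential lemma (Lemma~\ref{Horacediff}). By Remark~\ref{rem1} it suffices to prove that, for $s\le q_{(n,1,a,b)}$, the linear system $(I_{bH_1+aH_2+2P_1+\cdots+2P_s})_{a+b}$ in $\mathbb P^{n+1}$ has the expected dimension ${n+a\choose n}(b+1)-s(n+2)$; since $s\le q_{(n,1,a,b)}$, the expected dimension is exactly ${n+a\choose n}(b+1)-s(n+2)\ge 0$, so we must show the $s$ double points impose independent conditions on $(I_{bH_1+aH_2})_{a+b}$. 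It is enough to do this for $s=q_{(n,1,a,b)}$ (then smaller $s$ follow by semicontinuity/specialization), and in that case we aim for $\dim (I_{bH_1+aH_2+2P_1+\cdots+2P_s})_{a+b}=r_{(n,1,a,b)}$, or to prove the vanishing statement after first throwing in $r_{(n,1,a,b)}$ generic points on a suitable linear subspace via Lemma~\ref{AggiungerePuntiSuSpazioLineare}.

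The core step is a differential Horace specialization with respect to a hyperplane $H\cong\mathbb P^n$ chosen so that $H$ contains the linear space $aH_2$ (the one of multiplicity $a$); equivalently, restrict to the hyperplane that is the second ruling direction. Writing the degree as $d=a+b$, the trace on $H$ of $bH_1+aH_2$ behaves like the analogous scheme on $\mathbb P^n$ for the variety $X_{(n,1,a-1,b)}$ (we lose one unit in the $aH_2$ multiplicity), while the residue is governed by $X_{(n,1,a-2,b)}$-type data (we lose two units), and the points split according to which ones we "put on $H$" (contributing a reduced point to the trace and a double point to the residue) versus which ones stay off $H$ as full double points. One also has to bring in the first ruling $bH_1$: a second specialization, or a careful choice of which points lie on $H$, reduces the trace problem to one for $X_{(n-1,1,a,b)}$ on $H'\cong\mathbb P^n$ after a further Horace step with respect to a hyperplane containing $H_1$. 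Counting the numbers of points in each group so that the arithmetic balances is precisely where hypotheses (1)--(4) enter: (1) gives that on $\mathbb P^{n-1}$-type data we can fill up to $q_{(n-1,1,a,b)}$ double points with the expected (maximal rank) behaviour; (2) ensures we have enough points available to distribute $q_{(n-1,1,a,b)}+r_{(n-1,1,a,b)}$ of them appropriately; (3) guarantees the trace system (an $X_{(n,1,a-1,b)}$-type system) stays in the "surjective-enough" range $s'\le e_{(n,1,a-1,b)}$; and (4) guarantees the residue system (an $X_{(n,1,a-2,b)}$-type system) is already in the "fills the whole space" range $s''\ge e^*_{(n,1,a-2,b)}$, so that both \textbf{Dime} and \textbf{Degue} hypotheses of Lemma~\ref{Horacediff} hold.

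Concretely, I would: (i) reduce to $s=q_{(n,1,a,b)}$ and reformulate via Theorem~\ref{metodoaffineproiettivo} and Remark~\ref{rem1}; (ii) pick the Horace hyperplane $H$ and compute $\mathrm{Tr}_H$ and $\mathrm{Res}_H$ of the fixed part $bH_1+aH_2$, identifying them (up to the reduced points we add) with the relevant systems for $(n,1,a-1,b)$ and $(n,1,a-2,b)$; (iii) choose $r_1:=$ (number of double points specialized onto $H$) to be roughly $q_{(n,1,a,b)}-q_{(n-1,1,a,b)}$, so that the trace carries $r_1$ reduced points plus the $\mathrm{Tr}_H$ part and the residue carries the remaining $s-r_1$ double points plus $r_1$ double points on $H$ plus the $\mathrm{Res}_H$ part; (iv) invoke hypothesis (3) to get $\mathbf{Dime}$ (the trace system has the expected, here zero-or-expected, dimension, possibly after using Lemma~\ref{AggiungerePuntiSuSpazioLineare} to absorb $r_{(n-1,1,a,b)}$ extra collinear points, which is where (1) and (2) are used) and hypothesis (4) to get $\mathbf{Degue}$ (the residue system vanishes in degree $d-1$); (v) apply Lemma~\ref{Horacediff} to conclude $\dim(I_{bH_1+aH_2+2P_1+\cdots+2P_s})_{a+b}$ has the expected value, and translate back. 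The main obstacle I expect is bookkeeping: making the integer counts of points in the three groups (double points off $H$, double points on $H$ contributing reduced traces, and the fixed linear-space contributions) match exactly the numerology of $q,r,q^*$ so that the trace lands at or below $e_{(n,1,a-1,b)}$ and the residue at or above $e^*_{(n,1,a-2,b)}$ simultaneously — and dealing with the small-degree edge cases (e.g. $b$ small, or $d-1$ close to $a-2$) where the residue scheme might not literally be of the asserted form and needs a separate direct check or an application of Theorem~\ref{teoremaCatGerGim}/\ref{teoremaChiantiniCil} as a base case.
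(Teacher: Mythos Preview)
Your outline has the right skeleton --- reduce to $s=q_{(n,1,a,b)}$, translate via Theorem~\ref{metodoaffineproiettivo}, apply Horace's differential lemma with respect to a hyperplane $H\ni H_2$, and finish with Lemma~\ref{AggiungerePuntiSuSpazioLineare} --- but the roles of trace and residue are swapped, and this propagates through your numerology and your assignment of the hypotheses.

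Since $H_2\in H$, one has $\mathrm{Tr}_H(aH_2)=aH_2$ and $\mathrm{Res}_H(aH_2)=(a-1)H_2$, while $H_1$ is generic so $\mathrm{Tr}_H(bH_1)=b(H_1\cap H)=bH_1'$ with $H_1'\simeq\mathbb P^{n-2}$ and $\mathrm{Res}_H(bH_1)=bH_1$. Hence the \textbf{Dime} (trace) problem lives in $H\simeq\mathbb P^n$ with fixed part $bH_1'+aH_2$, i.e.\ it is of type $X_{(n-1,1,a,b)}$, not $X_{(n,1,a-1,b)}$; and the \textbf{Degue} (residue) problem has fixed part $bH_1+(a-1)H_2$, i.e.\ it is of type $X_{(n,1,a-1,b)}$, not $X_{(n,1,a-2,b)}$. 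Consequently the correct choice is to specialize $q_{(n-1,1,a,b)}$ double points onto $H$ (not $q_{(n,1,a,b)}-q_{(n-1,1,a,b)}$), and to use the $r_{(n-1,1,a,b)}$ \emph{generic} (off-$H$) double points as the ``differential'' ones in Lemma~\ref{Horacediff}. With this bookkeeping, \textbf{Dime} is exactly the statement that $q_{(n-1,1,a,b)}$ double points plus $r_{(n-1,1,a,b)}$ simple points impose independent conditions on the $(n-1,1,a,b)$ system, which is hypothesis~(1); no second Horace step through $H_1$ is needed.

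The $(a-2)$ does not appear in the residue directly; it enters only inside the \textbf{Degue} verification. After identifying $W=\mathrm{Res}_HZ+\sum 2R_i|_H$ with $bH_1+(a-1)H_2$ plus $q_{(n,1,a,b)}-q_{(n-1,1,a,b)}$ double points (this is where hypothesis~(3) is used, to know the dimension is as expected), there remain $q_{(n-1,1,a,b)}$ simple points $\widetilde P_i$ lying on $H$ to impose. Lemma~\ref{AggiungerePuntiSuSpazioLineare} reduces this to checking $\dim(I_{X+H})_{a+b-1}=0$, and taking $\mathrm{Res}_H$ once more yields $bH_1+(a-2)H_2$ plus $q_{(n,1,a,b)}-q_{(n-1,1,a,b)}-r_{(n-1,1,a,b)}$ double points; hypothesis~(4) is exactly what makes this vanish. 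Hypothesis~(2) is only the feasibility condition $q_{(n,1,a,b)}-r_{(n-1,1,a,b)}\ge q_{(n-1,1,a,b)}$ ensuring the split of the $P_i$ into the required groups is possible. Once you un-swap trace and residue, your plan becomes the paper's proof verbatim.
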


\begin{proof}
It is enough to prove that  for $s = q_{(n,1,a,b)}$, the variety $ \sigma_s(X_{(n,1,a,b)} ) $ has the expected dimension.

Let $H_{1}, H_{2}$ be as in Theorem \ref{metodoaffineproiettivo}, so
$H_{1}, H_{2}\subset \mathbb P^ {n+1}$ are  generic projective linear
spaces. More precisly,   $H_{1} \simeq \mathbb P^{n-1}$ and $H_2$ is a point. Let
$P_{1}, \dots ,P_{q_{(n,1,a,b)}}, $ $P'_{1}, \dots ,P'_{r_{(n,1,a,b)}}\in \mathbb P^{n+1}$  be
generic points and set

$$\mathbb{X} = bH_1+aH_2+2P_{1}+ \cdots + 2P_{q_{(n,1,a,b)}}+ P'_{1}+\dots +P'_{r_{(n,1,a,b)}} \subset \mathbb P^{n+1}.$$

Let  $H\subset \mathbb P^{n+1}$  be  a generic hyperplane through the point $H_2$. Now specialize the points 
$P_{1}, \cdots ,P_{q_{(n-1,1,a,b)}}$ on  $H$, and denote by 
$\widetilde P_{1}, \cdots ,\widetilde P_{q_{(n-1,1,a,b)}}$ the specialized points and by  $\widetilde {\mathbb X} $ the specialized scheme. 

Now denote by Z the scheme
 $$Z= bH_1+aH_2+2 \widetilde P_{1}+ \cdots + 2 \widetilde P_{q_{(n-1,1,a,b)}}
$$
$$
+ 2P_{q_{(n-1,1,a,b)}+1}  +  \dots  + 2P_{q_{(n,1,a,b)} - r_{(n-1,1,a,b)}} 
+ P'_{1}+\dots +P'_{r_{(n,1,a,b)}}.
$$
(Observe that this is possible, in fact, by the inequality { (2)},  we have $q_{(n,1,a,b)}-r_{(n-1,1,a,b) \geq q_{(n-1,1,a,b)}}$).
By this notation we have
$$
\widetilde {\mathbb X} =Z+  2P_{q_{(n,1,a,b)} - r_{(n-1,1,a,b)}  +1} + \dots +  2P_{q_{(n,1,a,b)} } ,
$$
that is, $\widetilde {\mathbb X} $ is union of $Z$ and $r_{(n-1,1,a,b)} $ generic double points.

We will  apply the Horace's differential lemma  (see Lemma \ref{Horacediff})
 to the scheme $ \widetilde {\mathbb X}$,
with $t=n+1$, $d=a+b$, $r= r_{(n-1,1,a,b)}$.

Let $R_1,...,R_{r_{(n-1,1,a,b)} } $ 
be generic  points in $H$,  and let
$$W= Res _H Z+ 2R_1|_H + ... +2R_{r_{(n-1,1,a,b)} } |_H
\subset \mathbb P^{n+1} , $$ 
 $$T=Tr _H  Z+ R_1 + ... + R_{r_{(n-1,1,a,b)} }   \subset \mathbb P^ {n} \simeq H.$$
If we will prove that \par
\noindent
${\bf Degue:} \hskip 1cm \dim (I_{W})_{a+b-1}=0,$
 \par
\noindent${\bf Dime:} \hskip 1.2cm \dim (I_{T})_{a+b}= 0,$\par
\noindent
then  by Lemma \ref{Horacediff} we will have
$
 \dim (I_{\widetilde {\mathbb X} })_{a+b}=0. $

It is easy to see that
$$T= bH_1'+aH_2+2 \widetilde P_{1}|_H+ \cdots + 2 \widetilde P_{q_{(n-1,1,a,b)}}|_H
+ R_1 + ... + R_{r_{(n-1,1,a,b)} } ,
$$
where $H_1'=H_1\cap H\simeq \mathbb P^{n-2}$.
Thus by (1), by Theorem \ref {metodoaffineproiettivo} and by Remark  \ref {rem1}  we immediately get 
$$\dim (I_{T})_{a+b}= 0.$$
Now we calculate  the dimension of $ I_{W}$ in degree $a+b-1$. We have
$$\begin{array}{rl}
W= &  Res _H(bH_1+aH_2+2 \widetilde P_{1}+ \cdots + 2 \widetilde P_{q_{(n-1,1,a,b)}}\\
&+ 2P_{q_{(n-1,1,a,b)}+1}  +  \dots  + 2P_{q_{(n,1,a,b)} - r_{(n-1,1,a,b)}} 
+ P'_{1}+\dots +P'_{r_{(n,1,a,b)}} )\\
&+ 2R_1|_H + ... + 2R_{r_{(n-1,1,a,b)} } |_H \\
=&  bH_1+(a-1)H_2+ \widetilde P_{1}+ \cdots +  \widetilde P_{q_{(n-1,1,a,b)}}\\
&+ 2P_{q_{(n-1,1,a,b)}+1}  +  \dots  + 2P_{q_{(n,1,a,b)} - r_{(n-1,1,a,b)}} \\
&+ P'_{1}+\dots +P'_{r_{(n,1,a,b)}} +2R_1|_H + \cdots + 2R_{r_{(n-1,1,a,b)} } |_H.
\end{array}
$$

In order to compute the dimension of $\dim( I_{W})_{ a+b-1 }$, we first  consider the scheme:
$$\begin{array}{rl}
W' =  & bH_1+(a-1)H_2
+ 2P_{q_{(n-1,1,a,b)}+1}  +  \dots  + 2P_{q_{(n,1,a,b)} - r_{(n-1,1,a,b)}} \\
&
+2R_1 + ... + 2R_{r_{(n-1,1,a,b)} }.
\end{array}
$$
The points $H_2, R_1, \dots, R_{r_{(n-1,1,a,b)} }$ lie on $H\simeq \mathbb P^{n}$, 
but 
$$\sharp \{H_2, R_1, \dots, R_{r_{(n-1,1,a,b)} }\}\leq n+1,$$ hence  we may say that $W' $ is union of $bH_1+(a-1)H_2$ and 
$q_{(n,1,a,b)} -q_{(n-1,1,a,b)} $ generic double points. So by (3) we know that the dimension of $I_{W'}$ in degree $a+b-1$ is as expected. It follows that  the dimension in degree $a+b-1$ of its  subscheme 
$$\begin{array}{rl}
W''= & bH_1+(a-1)H_2
+ 2P_{q_{(n-1,1,a,b)}+1}  +  \dots  + 2P_{q_{(n,1,a,b)} - r_{(n-1,1,a,b)}} \\
&+2R_1|_H + ... + 2R_{r_{(n-1,1,a,b)} }|_H.
\end{array}
$$
 is also as expcted, that is,
 $$\dim (I_{W''})_{a+b-1}
 $$
 $$=
 {n+a-1\choose n}  (b+1)-   (n+2  )  (q_{(n,1,a,b)} -q_{(n-1,1,a,b)} )+r_{(n-1,1,a,b)} .
 $$
From here, since 
$$W=  W'' + \widetilde P_{1}+ \cdots +  \widetilde P_{q_{(n-1,1,a,b)}}+  P'_{1}+\dots +P'_{r_{(n,1,a,b)}}$$
and since  the points  $P'_{1},\dots ,P'_{r_{(n,1,a,b)}} $ are generic, we easily get that 
$$
\dim (I_{W- (\widetilde P_{1}+ \cdots +  \widetilde P_{q_{(n-1,1,a,b)}} )}  )_{a+b-1} 
$$
$$
={n+a-1\choose n}  (b+1)-   (n+2  )  (q_{(n,1,a,b)} -q_{(n-1,1,a,b)} )+r_{(n-1,1,a,b)} -r_{(n,1,a,b)}
$$
$$=q_{(n-1,1,a,b)}.
$$
Now, in order to prove that $\dim (I_W)_{a+b-1}=0$, or, equivalently, that the $q_{(n-1,1,a,b)}$ points $\widetilde P_{i}$ lying on $H$, gives independent conditions to the forms of degree $a+b-1$ of $I_ {W- (  \widetilde P_{1}+ \cdots +  \widetilde P_{q_{(n-1,1,a,b)}}  ) }$, we apply 
Lemma \ref{AggiungerePuntiSuSpazioLineare} with 
$$X= {W-  (  \widetilde P_{1}+ \cdots +  \widetilde P_{q_{(n-1,1,a,b)}}  )} . $$
Since obviously   $2R_i|_H \subset H,$   we have
$$\begin{array}{rl}
X+H = &  bH_1+(a-1)H_2
+ 2P_{q_{(n-1,1,a,b)}+1}  +  \dots  + 2P_{q_{(n,1,a,b)} - r_{(n-1,1,a,b)}}  \\
&+ P'_{1}+\dots +P'_{r_{(n,1,a,b)}} +2R_1|_H + \cdots + 2R_{r_{(n-1,1,a,b)} } |_H +H \\
 =&  bH_1+(a-1)H_2+H +
 2P_{q_{(n-1,1,a,b)}+1}  +  \dots  \\ 
 &+ 2P_{q_{(n,1,a,b)} - r_{(n-1,1,a,b)}} + P'_{1}+\dots +P'_{r_{(n,1,a,b)}} .
\end{array}
$$

So
$$\dim (I_{X+H})_{a+b-1} = \dim (I_{Res_H (X+H) })_{a+b-2} ,
$$
where
$$Res_H (X+H)=  bH_1+(a-2)H_2
$$
$$
+ 2P_{q_{(n-1,1,a,b)}+1}  +  \dots  + 2P_{q_{(n,1,a,b)} - r_{(n-1,1,a,b)}} 
+ P'_{1}+\dots +P'_{r_{(n,1,a,b)}} .$$
By (4) we immediately get that
$$\dim (I_{Res_H (X+H) })_{a+b-2}=0.$$ 
So, by 
Lemma \ref{AggiungerePuntiSuSpazioLineare}, it follows that $\dim (I_W)_{a+b-1}=0$.

Thus, by Lemma \ref{Horacediff}  we get $ \dim (I_{\widetilde {\mathbb X} })_{a+b}=0$, and from here, by  the semicontinuity of the Hilbert function we have  $\dim (I_ {\mathbb X} )_{a+b}=0$, that is, the expected dimension. 

Since the $P'_i$  are  generic points, it immediately follows that also 
$$\mathbb{X} -(P'_{1}+\dots +P'_{r_{(n,1,a,b)}}) = bH_1+aH_2+2P_{1}+ \cdots + 2P_{q_{(n,1,a,b)}} $$
 has the expected dimension. So by Remark \ref{rem1} we are done.

\end{proof}


 \begin{lem}\label{lemma2} Notation as in \ref{SV}, let $n,a,b$ positive integers,   $n \geq 2$, $a \geq 2$,  $s \geq q^*_{(n,1,a,b)}$. If

{\rm (1)}  $q_{(n-1,1,a,b)} = e_{(n-1,1,a,b)}$;

{\rm (2*)} $q^*_{(n,1,a,b)} \geq q_{(n-1,1,a,b)}+r_{(n-1,1,a,b)}$;

{\rm (3*)} $e_{(n,1,a-1,b)} \geq q^*_{(n,1,a,b)}-q_{(n-1,1,a,b)}$;

{\rm (4*)} $e^*_{(n,1,a-2,b)} \leq q^*_{(n,1,a,b)}-q_{(n-1,1,a,b)}-r_{(n-1,1,a,b)}$;

then $ \sigma_s(X_{(n,1,a,b)} ) $ has the expected dimension, that is,
$$\dim\sigma_s(X_{(n,1,a,b)} ) =N.
$$

\end{lem}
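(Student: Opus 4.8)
The plan is to mimic the proof of Lemma \ref{lemma1}, adapting it to the regime $s \geq q^*_{(n,1,a,b)}$ where the target is surjectivity rather than injectivity. Since dimension statements and the multigraded-to-standard dictionary are monotone, it is enough to prove the claim for $s = q^*_{(n,1,a,b)}$; for larger $s$ the dimension of $\sigma_s$ stays $N$. So fix $s = q^*_{(n,1,a,b)}$ and, via Theorem \ref{metodoaffineproiettivo} and Remark \ref{rem1}, reduce to showing that $\dim (I_{\mathbb{X}})_{a+b} = 0$ where $\mathbb{X} = bH_1 + aH_2 + 2P_1 + \cdots + 2P_{q^*_{(n,1,a,b)}}$ with $H_1 \simeq \mathbb{P}^{n-1}$, $H_2$ a point, and the $P_i$ generic in $\mathbb{P}^{n+1}$. (When $r_{(n,1,a,b)} = 0$ this is literally the affine-projective translation; when $r_{(n,1,a,b)} > 0$ one has $q^* = q+1$, so $\mathbb{X}$ carries one extra double point beyond $q$, and showing $\dim(I_{\mathbb{X}})_{a+b} = 0$ is exactly the assertion that $e^*_{(n,1,a,b)} \leq q^*_{(n,1,a,b)}$, i.e. $N$ is reached by $s = q^*$.)

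Next I would run Horace's differential lemma (Lemma \ref{Horacediff}) on $\widetilde{\mathbb{X}}$ with $t = n+1$, $d = a+b$, and $r = r_{(n-1,1,a,b)}$, exactly as in Lemma \ref{lemma1}: pick a generic hyperplane $H$ through $H_2$, specialize the first $q_{(n-1,1,a,b)}$ of the double points onto $H$, write $\widetilde{\mathbb{X}} = Z + 2P_{\ast} + \cdots$ as the union of a residual scheme $Z$ and $r_{(n-1,1,a,b)}$ generic double points, and introduce generic $R_1,\dots,R_{r_{(n-1,1,a,b)}} \in H$. The trace $T = Tr_H Z + R_1 + \cdots + R_{r_{(n-1,1,a,b)}}$ on $H \simeq \mathbb{P}^n$ is $bH_1' + aH_2 + 2\widetilde P_1|_H + \cdots + 2\widetilde P_{q_{(n-1,1,a,b)}}|_H + R_1 + \cdots$, and the condition {\bf Dime}, $\dim(I_T)_{a+b} = 0$, follows from hypothesis (1) ($q_{(n-1,1,a,b)} = e_{(n-1,1,a,b)}$) together with Theorem \ref{metodoaffineproiettivo} and Remark \ref{rem1}, just as before — the inequality (2*) is what guarantees the residual $Z$ still contains the $q_{(n-1,1,a,b)}$ specialized double points so the bookkeeping is legitimate.

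The bulk of the work is the {\bf Degue} condition $\dim(I_W)_{a+b-1} = 0$, where $W = Res_H Z + 2R_1|_H + \cdots$ simplifies (residuation drops $H_2$ to $(a-1)H_2$ and the specialized double points to simple points $\widetilde P_i$) to $bH_1 + (a-1)H_2 + \widetilde P_1 + \cdots + \widetilde P_{q_{(n-1,1,a,b)}} + 2P_{\ast} + \cdots + P'_\ast + 2R_1|_H + \cdots$. I would strip off the $\widetilde P_i$ and the generic $P'_j$ using genericity, invoke hypothesis (3*) ($e_{(n,1,a-1,b)} \geq q^*_{(n,1,a,b)} - q_{(n-1,1,a,b)}$) to see the auxiliary scheme $W'' = bH_1 + (a-1)H_2 + 2P_\ast + \cdots + 2R_1|_H + \cdots$ has the expected (generic) dimension in degree $a+b-1$, then re-add the $\widetilde P_i$ via Lemma \ref{AggiungerePuntiSuSpazioLineare} with $X = W - (\widetilde P_1 + \cdots + \widetilde P_{q_{(n-1,1,a,b)}})$. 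For that I must check $\dim(I_{X+H})_{a+b-1} = 0$, which by residuating $H$ reduces to $\dim(I_{bH_1 + (a-2)H_2 + 2P_\ast + \cdots + P'_\ast + \cdots})_{a+b-2} = 0$; this is precisely hypothesis (4*) ($e^*_{(n,1,a-2,b)} \leq q^*_{(n,1,a,b)} - q_{(n-1,1,a,b)} - r_{(n-1,1,a,b)}$), again via Theorem \ref{metodoaffineproiettivo} and Remark \ref{rem1}. Finally, semicontinuity of the Hilbert function passes from $\widetilde{\mathbb{X}}$ back to $\mathbb{X}$, and removing the generic $P'_j$ (if any) finishes via Remark \ref{rem1}.

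The main obstacle I expect is purely combinatorial rather than geometric: verifying that the numerical identities line up so that stripping the $\widetilde P_i$ leaves exactly the right codimension count for Lemma \ref{AggiungerePuntiSuSpazioLineare} to apply — i.e. that $\dim(I_{W - (\widetilde P_1 + \cdots)})_{a+b-1}$ equals $q_{(n-1,1,a,b)}$ on the nose. In the $q^*$-regime the bookkeeping differs by the term coming from $r_{(n,1,a,b)}$ (since $q^* = q + 1$ when $r > 0$), so one must be careful that the count $\binom{n+a-1}{n}(b+1) - (n+2)(q^*_{(n,1,a,b)} - q_{(n-1,1,a,b)}) + r_{(n-1,1,a,b)} - r_{(n,1,a,b)}$ still collapses to $q_{(n-1,1,a,b)}$; once that dimension count checks out, the structure is word-for-word parallel to Lemma \ref{lemma1}.
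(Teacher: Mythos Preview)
Your approach is essentially identical to the paper's. Two small corrections, both concerning the bookkeeping you flag at the end.

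First, in the $q^*$-regime there are \emph{no} simple points $P'_j$: the scheme is just $\mathbb{X}^* = bH_1 + aH_2 + 2P_1 + \cdots + 2P_{q^*_{(n,1,a,b)}}$, so the $P'_*$'s you carry through $W$, through $Res_H(X+H)$, and into your final dimension count (the ``$-r_{(n,1,a,b)}$'' term) should all be deleted. (The paper in fact disposes of the case $r_{(n,1,a,b)}=0$ by citing Lemma~\ref{lemma1} directly, and only runs the Horace argument when $q^*=q+1$.)

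Second, once the spurious $-r_{(n,1,a,b)}$ is removed, the count you worry about does \emph{not} collapse to $q_{(n-1,1,a,b)}$ on the nose. Using $\binom{n+a-1}{n}=\binom{n+a}{n}-\binom{n+a-1}{n-1}$ one gets
\[
\binom{n+a-1}{n}(b+1) - (n+2)\bigl(q^*_{(n,1,a,b)} - q_{(n-1,1,a,b)}\bigr) + r_{(n-1,1,a,b)}
= r_{(n,1,a,b)} - (n+2) + q_{(n-1,1,a,b)} \le q_{(n-1,1,a,b)} - 1,
\]
strictly less than $q_{(n-1,1,a,b)}$. This is not an obstacle but an advantage: you now have \emph{more} generic points $\widetilde P_i$ on $H$ than the dimension of $(I_X)_{a+b-1}$, so Lemma~\ref{AggiungerePuntiSuSpazioLineare} (applied with the actual value $s'=\dim(I_X)_{a+b-1}\le q_{(n-1,1,a,b)}$ in place of $s$) still forces $\dim(I_W)_{a+b-1}=0$, exactly as the paper does. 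With these two fixes your sketch matches the paper's proof.
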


\begin{proof}
Observe that (1) in Lemma \ref{lemma1} is the same condition of (1) in this Lemma \ref{lemma2}. 

It is enough to prove that $\dim  \sigma_s(X_{(n,1,a,b)} ) =N $, for $s = q^*_{(n,1,a,b)}$.

For $r_{(n,1,a,b)}=0$, we have $q^*_{(n,1,a,b)}=q_{(n,1,a,b)}$, and the conclusion immediately follows from Lemma \ref{lemma1}.

Assume $r_{(n,1,a,b)}>0$, hence $q^*_{(n,1,a,b)}=q_{(n,1,a,b)}+1.$

Let $H_{1}, H_{2}$ be as in Theorem \ref{metodoaffineproiettivo}, so, as in the previous lemma,
$H_{1} \simeq \mathbb P^{n-1}$ and $H_2$ is a point. 

Let
$P_{1}, \dots ,P_{q^*_{(n,1,a,b)}}  \in \mathbb P^{n+1}$  be
generic points and set

$$\mathbb{X^*} = bH_1+aH_2+2P_{1}+ \cdots + 2P_{q^*_{(n,1,a,b)}}\subset \mathbb P^{n+1}.$$

Let  $H\subset \mathbb P^{n+1}$  be  a hyperplane passing through the point $H_2$ and, as in Lemma \ref{lemma1}, specialize the points 
$P_{1}, \cdots ,P_{q_{(n-1,1,a,b)}}$ on  $H$. Denote by 
$\widetilde P_{1}, \cdots ,\widetilde P_{q_{(n-1,1,a,b)}}$ the specialized points and by  $\widetilde {\mathbb X^*} $ the specialized scheme.

The proof is analogous to the one of Lemma \ref {lemma1}, hence we describe it briefly.

Since by (2*)  we have $q^*_{(n,1,a,b)}-r_{(n-1,1,a,b)} \geq q_{(n-1,1,a,b)}$,
we can consider the following scheme:
$$Z= bH_1+aH_2
$$
$$+2 \widetilde P_{1}+ \cdots + 2 \widetilde P_{q_{(n-1,1,a,b)}}
+ 2P_{q_{(n-1,1,a,b)}+1}  +  \dots  + 2P_{q^*_{(n,1,a,b)} - r_{(n-1,1,a,b)}}.
$$
We  apply the Horace's differential lemma  (see Lemma \ref{Horacediff}), with $t=n+1$ and  $d=a+b$,  to  $\widetilde {\mathbb X^*} $,  which is union of $Z$ and $r_{(n-1,1,a,b)} $ generic double points.

Let $R_1,...,R_{r_{(n-1,1,a,b)} } $ 
be generic  points in $H$,  and let
$$W= Res _H Z+ 2R_1|_H + ... +2R_{r_{(n-1,1,a,b)} } |_H
\subset \mathbb P^{n+1} , $$ 
 $$T=Tr _H  Z+ R_1 + ... + R_{r_{(n-1,1,a,b)} }   \subset \mathbb P^ {n} \simeq H.$$

Note that $T$ is the same scheme $T$ that appears in the proof of Lemma \ref {lemma1}, so
by (1), by Theorem \ref {metodoaffineproiettivo} and by Remark  \ref {rem1}  we get that
$$\dim (I_{T})_{a+b}= 0.$$
With regard to $W$ we have:
$$\begin{array}{rl}
W=  &  bH_1+(a-1)H_2+ \widetilde P_{1}+ \cdots +  \widetilde P_{q_{(n-1,1,a,b)}}
+ 2P_{q_{(n-1,1,a,b)}+1}  +  \dots  \\
&+ 2P_{q^*_{(n,1,a,b)} - r_{(n-1,1,a,b)}} 
+2R_1|_H + ... + 2R_{r_{(n-1,1,a,b)} } |_H.
\end{array}
$$
Let
$$
W' =  bH_1+(a-1)H_2
$$
$$
+ 2P_{q_{(n-1,1,a,b)}+1}  +  \dots  + 2P_{q^*_{(n,1,a,b)} - r_{(n-1,1,a,b)}} 
+2R_1 + ... + 2R_{r_{(n-1,1,a,b)} }.
$$
Now  $W' $ is union of $(bH_1+(a-1)H_2)$ and 
$q^*_{(n,1,a,b)} -q_{(n-1,1,a,b)} $ generic double points. So by (3*) the dimension of $(I_{W'})_{a+b-1}$ is as expected. 

It follows that  the dimension of the degree $a+b-1$ part of the following scheme:
$$W-(\widetilde P_{1}+ \cdots +  \widetilde P_{q_{(n-1,1,a,b)}})
= bH_1+(a-1)H_2$$
$$+ 2P_{q_{(n-1,1,a,b)}+1}  +  \dots  + 2P_{q^*_{(n,1,a,b)} - r_{(n-1,1,a,b)}} 
+2R_1|_H + ... + 2R_{r_{(n-1,1,a,b)} }|_H,$$
 is also as expcted. That is, (recall that $q^*_{(n,1,a,b)}= q_{(n,1,a,b)}+1$)
 $$\dim (I_{ W-(\widetilde P_{1}+ \cdots +  \widetilde P_{q_{(n-1,1,a,b)}}) })_{a+b-1}
 $$
 $$=
 {n+a-1\choose n}  (b+1)-   (n+2  )  (q^*_{(n,1,a,b)} -q_{(n-1,1,a,b)} )+r_{(n-1,1,a,b)} .
 $$
$$=r_{(n,1,a,b)} -(n+2)+  q_{(n-1,1,a,b)} \leq q_{(n-1,1,a,b)} -1
.$$
In order to prove that $\dim (I_W)_{a+b-1}=0$, we apply 
Lemma \ref{AggiungerePuntiSuSpazioLineare} with 
$$X= {W-  (  \widetilde P_{1}+ \cdots +  \widetilde P_{q_{(n-1,1,a,b)}}  )} . $$
We have
$$X+H =  bH_1+(a-1)H_2+H
$$
$$
+ 2P_{q_{(n-1,1,a,b)}+1}  +  \dots  + 2P_{q^*_{(n,1,a,b)} - r_{(n-1,1,a,b)}} .
$$

So
$$\dim (I_{X+H})_{a+b-1} = \dim (I_{Res_H (X+H) })_{a+b-2} ,
$$
where
$$Res_H (X+H)=  bH_1+(a-2)H_2
+ 2P_{q_{(n-1,1,a,b)}+1}  +  \dots  + 2P_{q^*_{(n,1,a,b)} - r_{(n-1,1,a,b)}} .$$
By (4*) we immediately get $\dim (I_{Res_H (X+H) })_{a+b-2}=0,$ so by 
Lemma \ref{AggiungerePuntiSuSpazioLineare} we have that $\dim (I_W)_{a+b-1}=0$.

Thus, by Lemma \ref{Horacediff}  we get $ \dim (I_{\widetilde {\mathbb X^*} })_{a+b}=0$, and from here, by  the semicontinuity of the Hilbert function we have  $\dim (I_ {\mathbb X^*} )_{a+b}=0$, that is, the expected dimension. Finally by Remark \ref{rem1} we get the conclusion.

\end{proof}

 \begin{lem}\label{lemma3} Notation as in \ref{SV}, let $n,a,b,s$ be positive integers,   $n \geq 2$, $a \geq 2$. Let {\rm (1)},  {\rm (3*)},  {\rm (4)} be as in the statements of Lemma \ref{lemma1} and Lemma \ref{lemma2}, that is,
 
(1)  $q_{(n-1,1,a,b)} = e_{(n-1,1,a,b)}$;

(3*) $e_{(n,1,a-1,b)} \geq q^*_{(n,1,a,b)}-q_{(n-1,1,a,b)}$;

(4) $e^*_{(n,1,a-2,b)} \leq q_{(n,1,a,b)}-q_{(n-1,1,a,b)}-r_{(n-1,1,a,b)}$;

then $ \sigma_s(X_{(n,1,a,b)} ) $ has the expected dimension, that is,
$$\dim\sigma_s(X_{(n,1,a,b)} ) =\min\{N, s(n+2)-1\}.
$$

\end{lem}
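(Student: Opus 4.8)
The plan is to derive Lemma \ref{lemma3} as a purely formal consequence of Lemma \ref{lemma1} and Lemma \ref{lemma2}, with no new geometric input. The guiding observation is that by the definition of $q^*_{(n,1,a,b)}$ one has $q^*_{(n,1,a,b)}\in\{q_{(n,1,a,b)},\,q_{(n,1,a,b)}+1\}$, so every positive integer $s$ satisfies either $s\le q_{(n,1,a,b)}$ or $s\ge q^*_{(n,1,a,b)}$. I would therefore handle the first range with Lemma \ref{lemma1} and the second with Lemma \ref{lemma2}. On the first range the expected dimension is $s(n+2)-1$, because $s(n+2)-1\le q_{(n,1,a,b)}(n+2)-1\le {n+a\choose n}(b+1)-1=N$; on the second range it is $N$, because $s(n+2)-1\ge q^*_{(n,1,a,b)}(n+2)-1\ge {n+a\choose n}(b+1)-1=N$. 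So reconciling the two conclusions with the uniform statement ``$\dim\sigma_s=\min\{N,s(n+2)-1\}$'' is automatic.

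The only real work is to check that hypotheses (1), (3*), (4) imply the full list (1)--(4) of Lemma \ref{lemma1} and the full list (1), (2*), (3*), (4*) of Lemma \ref{lemma2}. I would begin by recording the trivial inequality $q^*_{(n,1,a,b)}\ge q_{(n,1,a,b)}$. With it in hand: (1) and (3*) are shared verbatim; (3*) implies (3) of Lemma \ref{lemma1}; and (4) implies (4*) of Lemma \ref{lemma2}. The one condition that is not immediate is (2) (and hence (2*)). For this I would use that $e^*_{(n,1,a-2,b)}$ is a nonnegative integer, so that (4) gives $q_{(n,1,a,b)}-q_{(n-1,1,a,b)}-r_{(n-1,1,a,b)}\ge e^*_{(n,1,a-2,b)}\ge 0$, which is precisely (2); then (2*) follows from (2) together with $q^*_{(n,1,a,b)}\ge q_{(n,1,a,b)}$.

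Once the hypotheses are verified, the proof finishes by invoking Lemma \ref{lemma1} for each $s\le q_{(n,1,a,b)}$ and Lemma \ref{lemma2} for each $s\ge q^*_{(n,1,a,b)}$, and combining the two cases through the range remark above. I do not expect a genuine obstacle here; the only step that merits a moment's attention is the case $a=2$, where $e^*_{(n,1,a-2,b)}$ becomes $e^*_{(n,1,0,b)}$ --- one just needs to observe that this quantity is still a well-defined nonnegative integer, so that the implication ``(4)$\,\Rightarrow\,$(2)'' remains valid. Everything else reduces to the one-line inequalities displayed above.
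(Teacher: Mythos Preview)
Your proposal is correct and follows exactly the paper's approach: the paper's proof consists of the single observation that $(4)\Rightarrow(2)$, $(2)\Rightarrow(2^*)$, $(3^*)\Rightarrow(3)$, and $(4)\Rightarrow(4^*)$, after which Lemmas \ref{lemma1} and \ref{lemma2} apply directly. You have simply made explicit the reasons behind these implications (in particular that $(4)\Rightarrow(2)$ because $e^*_{(n,1,a-2,b)}\ge 0$, and the others via $q^*_{(n,1,a,b)}\ge q_{(n,1,a,b)}$) and spelled out the range-splitting on $s$, which the paper leaves implicit.
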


\begin{proof}
Let also (2), (3), (2*), (4*) be as in the statements of Lemmas \ref{lemma1} and  \ref{lemma2}.
Since
$(2) \Rightarrow (2^*) $, 
$(3^*)  \Rightarrow(3)$,
$(4) \Rightarrow (4^*) $, 
$(4) \Rightarrow (2) $, then the conclusion immediately follows from 
 Lemmas \ref{lemma1} and \ref{lemma2}.

\end{proof}

\section{Segre-Veronese embeddings of $\mathbb P^n \times \mathbb P^1$ }\label{results}

Now that we have introduced all the necessary tools that we need for the main theorem of this paper, we are ready to state and to prove it.
In Section \ref{prelim}, Theorems \ref{teoremaCatGerGim}, \ref{teoremaBaurDraisma}, \ref{teoremaChiantiniCil}. \ref{teoremaAbrescia1},  \ref{teoremaAbrescia2}  
 highlighted several defective higher secant varieties  of $\mathbb P^n\times \mathbb P^1$, embedded with divisors of type $(a,b)$. The next theorem shows that those are the 
only defective cases.
As already observed in the introduction, we remind that in  \cite{AB09} this result  is obtained for the case $b\geq 3$.

\begin{thm}\label{mainteor}
 Notation as in \ref{SV}, let $n,a,b,s,d$ be positive integers.

The variety $\sigma_{s} \left ( X_{(n,1;a,b)}\right ) \subset \mathbb P^N$, ($N  = {n+a \choose n}(b+1)-1$), 
 has the expected dimension for any $n,a,b,s$, except in the following cases:
 $$n=2,\ \ (a,b)=(3,1),\ \ \  s=5,$$
  and
 $$(a,b)=(2,2d) ,\ \ \   d(n+1)+1 \leq s \leq (d+1)(n+1)-1  .$$
 
\end{thm}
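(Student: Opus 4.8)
The plan is to prove Theorem \ref{mainteor} by induction on $n$, using the three technical lemmata of the previous section (Lemmas \ref{lemma1}, \ref{lemma2}, \ref{lemma3}) to propagate the non-defectivity statement from $(n-1,1,a,b)$ to $(n,1,a,b)$, with a simultaneous (inner) induction on $a$ to feed the hypotheses on $e_{(n,1,a-1,b)}$ and $e^*_{(n,1,a-2,b)}$. The base cases of the $n$-induction are $n=1$ (handled by Theorem \ref{teoremaCatGerGim} on $\mathbb P^1\times\mathbb P^1$) and $n=2$ (handled by Theorem \ref{teoremaBaurDraisma} on $\mathbb P^2\times\mathbb P^1$); the base cases of the $a$-induction are $a=1$ (Theorem \ref{teoremaChiantiniCil}), $a=2$ (Theorem \ref{teoremaAbrescia1}), and $a=3$ (Theorem \ref{teoremaAbrescia2}). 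These five quoted results between them already cover $n\le 2$ or $a\le 3$, i.e. exactly the ``small'' cases, so the inductive engine only has to run in the range $n\ge 3$, $a\ge 4$, where, by inspection of the exceptional list, no defective case occurs and one wants to prove $\dim\sigma_s(X_{(n,1,a,b)})=\min\{N,s(n+2)-1\}$ for every $s$.

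\textbf{Key steps, in order.} First I would record, for $n\le 2$ or $a\le 3$, the exact values of $e_{(n,1,a,b)}$ and $e^*_{(n,1,a,b)}$ extracted from Theorems \ref{teoremaCatGerGim}--\ref{teoremaAbrescia2}: outside the exceptional $(a,b)=(2,2d)$ locus one has $e_{(n,1,a,b)}=q_{(n,1,a,b)}$ and $e^*_{(n,1,a,b)}=q^*_{(n,1,a,b)}$ (no defect), while for $(a,b)=(2,2d)$ one reads off the shifted values from Theorem \ref{teoremaAbrescia1}. Second, for the inductive step with $n\ge 3$ and $a\ge 4$, apply Lemma \ref{lemma3}: its hypotheses are (1) $q_{(n-1,1,a,b)}=e_{(n-1,1,a,b)}$, (3*) $e_{(n,1,a-1,b)}\ge q^*_{(n,1,a,b)}-q_{(n-1,1,a,b)}$, and (4) $e^*_{(n,1,a-2,b)}\le q_{(n,1,a,b)}-q_{(n-1,1,a,b)}-r_{(n-1,1,a,b)}$. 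Hypothesis (1) is the outer inductive hypothesis on $n$ (for the relevant $(a,b)$, which is never one of the two exceptions when $n-1\ge 2$ and $a\ge 4$, so the dimension is as expected). Hypotheses (3*) and (4) are the inner inductive hypotheses on $a$: since $a-1\ge 3$ and $a-2\ge 2$, one knows $e_{(n,1,a-1,b)}$ and $e^*_{(n,1,a-2,b)}$ either from a previous inner step or from the Abrescia/Chiantini--Ciliberto base cases, provided $(a-1,b)$ and $(a-2,b)$ are not of type $(2,2d)$ — and if one of them is, one must check (3*)/(4) directly using the shifted values. The third and main step is therefore the purely numerical verification that the binomial inequalities (3*) and (4) actually hold for all $n\ge 3$, $a\ge 4$, $b\ge 1$; this is where the combinatorial estimates on $\binom{n+a}{n}(b+1)/(n+2)$ and its neighbours are needed. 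Fourth, collect the cases: for $a\ge 4$, $n\ge 3$ conclude non-defectivity from Lemma \ref{lemma3}; for $a=2$ with $b=2d$, quote Theorem \ref{teoremaAbrescia1} verbatim to produce exactly the second exceptional family (and note $\delta_s$ there matches); for $a=3$, $n=2$, $b=1$, $s=5$ quote Theorem \ref{teoremaAbrescia2} for the first exception; and verify that for all other small $(a,b,n)$ the quoted theorems give the expected dimension.

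\textbf{Main obstacle.} The genuine work — and the step most likely to hide a subtlety — is the arithmetic verification of inequalities (3*) and (4) (equivalently the hypotheses (2),(3),(4),(2*),(4*) underlying Lemma \ref{lemma3}) across the whole range $n\ge 3$, $a\ge 4$, $b\ge 1$, and in particular handling the floor/ceiling discrepancies hidden in $q,q^*,r$ and the sporadic interference of the $(2,2d)$ family when it appears as $(a-1,b)$ or $(a-2,b)$ in the inductive hypotheses. One must show that $q^*_{(n,1,a,b)}-q_{(n-1,1,a,b)}$ is never larger than $e_{(n,1,a-1,b)}$ and that $q_{(n,1,a,b)}-q_{(n-1,1,a,b)}-r_{(n-1,1,a,b)}$ is never smaller than $e^*_{(n,1,a-2,b)}$; since $a-1$ can equal $3$ (where for $n=2$, $b=1$ there is a defect) and $a-2$ can equal $2$ (where for $b=2d$ there is a whole defective interval), the bounds $e_{(n,1,a-1,b)}$ and $e^*_{(n,1,a-2,b)}$ are not always equal to $q_{(n,1,a-1,b)}$ and $q^*_{(n,1,a-2,b)}$, and one has to carry the corrected values through. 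A clean way to organise this is to first dispatch all $(a,b)$ with $a-2\le 2$ or $a-1=3$ as separate base-type cases (so that in the main induction one always has $a-1\ge 4$ and the inductive hypotheses are ``clean'' expected-dimension statements), and then the remaining inequalities become uniform monotonicity estimates in $a$ and $b$ that can be verified by comparing consecutive binomial coefficients. Everything else — the geometric content — is already packaged inside Lemmas \ref{lemma1}--\ref{lemma3} via Horace differential specialisation, so no new geometry is required in this proof.
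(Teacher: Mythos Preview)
Your proposal is correct and follows essentially the same route as the paper: induction (the paper packages it as a single induction on $n+a$, you as nested induction on $n$ and $a$, but the dependencies $(n-1,a)$, $(n,a-1)$, $(n,a-2)$ make these equivalent), with the base cases $n\le 2$ or $a\le 3$ handled by Theorems \ref{teoremaCatGerGim}--\ref{teoremaAbrescia2} and the inductive step reduced via Lemma \ref{lemma3} to the numerical verification of (1), (3*), (4). One small simplification: your worry about $a-1=3$ is unnecessary, since in the inductive range $n\ge 3$ the $(n,1,3,b)$ variety is never defective; the only genuine interference comes from $a-2=2$ with $b$ even, which the paper isolates as its Cases (d) and (e) using the exact value $e^*_{(n,1,2,2d)}=(d+1)(n+1)$ from Theorem \ref{teoremaAbrescia1}.
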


\begin{proof}
We will prove the theorem by induction on $n+a$. For $n=1$ and $n=2$ the conclusion follows from Theorem \ref{teoremaCatGerGim} and from Theorem \ref{teoremaBaurDraisma}, respectively.
If $a=1,2,3$ we get the conclusion by Theorems \ref{teoremaChiantiniCil},  \ref{teoremaAbrescia1}, \ref{teoremaAbrescia2}. 
So assume $n \geq 3$ and $a \geq 4$.

We want to apply Lemma \ref{lemma3}. We will check that $n,a,b,s$ verify the inequalities (1), (3*) and (4) of  Lemma \ref{lemma3}, that is:

(1)  $q_{(n-1,1,a,b)} = e_{(n-1,1,a,b)}$;

(3*) $e_{(n,1,a-1,b)} \geq q^*_{(n,1,a,b)}-q_{(n-1,1,a,b)}$;

(4) $e^*_{(n,1,a-2,b)} \leq q_{(n,1,a,b)}-q_{(n-1,1,a,b)}-r_{(n-1,1,a,b)}$.
 
The inequality (1) holds by induction. 
Moreover, by the induction hypothesis, we have
$$e_{(n,1,a-1,b)} = q_{(n,1,a-1,b)} .
$$
Hence in order to show that  (3*) holds we will check that
$$q_{(n,1,a-1,b)} - q^*_{(n,1,a,b)}+q_{(n-1,1,a,b)}\geq0, \eqno {(\dag)}  
$$
that is,
$$
   \frac  { {n+a-1\choose n}  {(b+1)} - r_{(n,1,a-1,b)}   }  {(n+2)}
-
\left \lceil   \frac{ {n+a\choose n}  {(b+1)}} { (n+2)  }\right  \rceil
$$
$$
+\frac  { {n+a-1\choose {n-1}}  {(b+1)} - r_{(n-1,1,a,b)}   }  {(n+1)}
\geq 0.
$$
For $n=3$ and $a=4$ we have
$q^*_{(3,1,4,b)} = q_{(3,1,4,b)} = 7(b+1)$,  and  $q_{(3,1,3,b)} = 4(b+1)$
hence
$$q_{(n,1,a-1,b)} - q^*_{(n,1,a,b)}+q_{(n-1,1,a,b)}=
4(b+1)-7(b+1)+\frac{ 15(b+1)-r _{(2,1,4,b)}}4
$$
$$ \geq \frac {1}4 (3(b+1)-3)\geq 0.
$$
Assume $(n,a) \neq (3,4)$. 
In order to prove that  $(\dag)$ holds,  it is enough to show that
$$   
\frac  { {n+a-1\choose n}  {(b+1)}  }  {(n+2)}
-
 \frac{ {n+a\choose n}  {(b+1)}} { (n+2)  }
+\frac  { {n+a-1\choose {n-1}}  {(b+1)}  }  {(n+1)}-2
\geq 0.
$$
This inequality is equivalent to the following one:
$${ n+a-1\choose {n-1}}  (b+1) -2 (n+1)(n+2)
\geq 0.
$$
For $n>3$ and $a=4$, since $b \geq1$,  we have
$${ n+a-1\choose {n-1}}  (b+1) -2 (n+1)(n+2)\geq 
2{ n+3\choose {4}} -2 (n+1)(n+2)
$$
$$= \frac{1}{24} (n+1)(n+2)((n+3)n-24) \geq  \frac{1}{24} (5)(6)(28-24) \geq 0.
$$
For  $n \geq 3$ and $a >4$, since $b \geq1$  we have
$${ n+a-1\choose {n-1}}  (b+1) -2 (n+1)(n+2)\geq 
{ n+4\choose {5}}  2 -2 (n+1)(n+2)
$$
$$= \frac { (n+1)(n+2)}{60} ( (n+4)(n+3)n- 120) \geq  \frac 1{3} (126- 120) \geq0
.
$$
So we have proved that  (3*) holds. 
\medskip

 Now we have to check  that (4) holds, that is:
 
  (4) $e^*_{(n,1,a-2,b)} \leq q_{(n,1,a,b)}-q_{(n-1,1,a,b)}-r_{(n-1,1,a,b)}.$
  
  We consider the following five cases:
  
   {Case (a)}:   $a>4$;
   
{Case (b)}:  $a=4$, $n>3$ and $b$ odd;

{Case (c)}:  $a=4$, $n=3$ and $b$ odd;

  {Case (d)}:  $a=4$, $n=3$ and $b$ even;
  
   {Case (e)}:  $a=4$, $n>3$ and $b$ even.

In Case   (a) by the induction hypothesis, in Cases  (b) and (c) by Theorem \ref{teoremaAbrescia1},   we get
$$e^*_{(n,1,a-2,b)} = q^*_{(n,1,a-2,b)}.
$$
Then in Cases (a), (b) and (c), in order to show that  (4) holds we have check that
 $$q^*_{(n,1,a-2,b)} - q_{(n,1,a,b)}+q_{(n-1,1,a,b)}+r_{(n-1,1,a,b)} \leq0, \eqno{(\ddag)} $$
that is,
$$
\left \lceil   \frac{ {n+a-2\choose n}  {(b+1)}} { (n+2)  }\right  \rceil
- 
   \frac  { {n+a\choose n}  {(b+1)} - r_{(n,1,a,b)}   }  {(n+2)}
   $$ $$
   +
   \frac  { {n-1+a\choose {n-1}}  {(b+1)} - r_{(n-1,1,a,b)}   }  {(n+1)}
   + r_{(n-1,1,a,b)} 
\leq 0.
$$

It is easy to show that $f(b,n,a) \geq 0 $ implies that the inequality $(\ddag)$ holds,
where
$$ f(b,n,a)=
(b+1){n-2+a\choose {n-1}} \left (n-  \frac {n -1} a \right) $$
$$-(n+1 )(n+2) -  r_{(n-1,1,a,b)}  n (n+2) .
$$
In Case (a), since  $b \geq1$, $a \geq5$, $ r_{(n-1,1,a,b)} \leq n$, and  $n \geq3$  we get
$$
 f(b,n,a) \geq 2{n+3\choose {4}}   \frac {4n+1} 5 -(n+1 )(n+2) -   n^2 (n+2) 
 $$
  $$
 =\frac {n+2} {60} ( n(n+1)(4n^2+13n+3 -60 )-60)
 $$
 $$
 \geq \frac {n+2} {60} (216-60) \geq0.
 $$
In Case (b)  (where $a=4$ and $n\geq4$), since  $b \geq1$, $ r_{(n-1,1,a,b)} \leq n$,  we get
$$ f(b,n,a)\geq
2{n+2\choose {3}}   \frac {3n +1} 4  -(n+1 )(n+2) -   n^2 (n+2) 
$$
$$=\frac {n+2} {12} ( n(n+1)(3n+1 -12 )-12)\geq 
\frac {6} {12} ( 20-12) \geq0
.$$
So ${(\ddag)} $ holds both in Case (a) and in Case (b).

In Case (c), since $a=4$ and $n= 3$, we have
$$q^*_{(n,1,a-2,b)}=q^*_{(3,1,2,b)}= 2(b+1); \ \  q_{(n,1,a,b)}= q_{(3,1,4,b)} = 7(b+1)
$$
$$q_{(n-1,1,a,b)}= q_{(2,1,4,b)} = \frac{15 (b+1)- r_{(2,1,4,b)}}  4.$$
Hence, since  $b \geq1$, $ r_{(2,1,4,b)} \leq 3$,  we get
 $$q^*_{(n,1,a-2,b)} -q_{(n,1,a,b)}+q_{(n-1,1,a,b)}+r_{(n-1,1,a,b)}$$
 $$=2(b+1)-7(b+1)+\frac{15 (b+1)- r_{(2,1,4,b)}}  4 +  r_{(2,1,4,b)}
 $$
  $$=\frac {1}4  (-5 (b+1)+3  r_{(2,1,4,b)}) \leq - \frac {1}4 .
 $$

 It follows that ${(\ddag)} $ holds in Case (c).
 
Now we are left with Cases (d) and (e). 
Since $e^*_{(n,1,a-2,b)} +q_{(n-1,1,a,b)}+r_{(n-1,1,a,b)}$ is an integer, recalling that $q_{(n,1,a,b)}=    \left  \lfloor  \frac  { {n+4\choose n}  {(2d+1)}   }  {(n+2)}  \right \rfloor$, then the inquality (4) is equivalent to the following:
  $$e^*_{(n,1,a-2,b)} +q_{(n-1,1,a,b)}+r_{(n-1,1,a,b)} \leq 
    \frac  { {n+4\choose n}  {(2d+1)}   }  {(n+2)} .$$
Let 
$g(n,4,2d)= e^*_{(n,1,a-2,b)} +q_{(n-1,1,a,b)}+r_{(n-1,1,a,b)} -
    \frac  { {n+4\choose n}  {(2d+1)}   }  {(n+2)}$. 
    
    Since Cases  (d) and (e), where  $b$ is even (say $b=2d$), and $a=4$, from Theorem \ref{teoremaAbrescia1} we have
$$e^*_{(n,1,a-2,b)} = (d+1)(n+1),$$
it follows that
$$g(n,4,2d)=(d+1)(n+1)
$$
$$+
 \frac  { {n-1+4\choose {4}}  {(2d+1)} - r_{(n-1,1,4,2d)}   }  {(n+1)} 
   + r_{(n-1,1,4,2d)} -
  \frac  { {n+4\choose 4}  {(2d+1)}   }  {(n+2)}  .
$$ 
 In Case (d) (where $n=3$), since $\frac {  15(2d+1)- r_{(2,1,4,2d)}}4$ is an integer and $0 \leq r_{(2,1,4,2d)} \leq 3$, if $d $ is even we have $ r_{(2,1,4,2d)} = 3$, if d is odd we have  $ r_{(2,1,4,2d)} = 1$. Hence 
 
$ \bullet$ for d even (so $d \geq2$) we have
 $$g(n,4,2d)  =  4(d+1)+ \frac {  15(2d+1)+9}4 - 7(2d+1) 
$$
 $$
=  \frac {12-10d}4 \leq -2 <0 .
$$
$ \bullet$ for d odd (so $d \geq1$) we have
$$g(n,4,2d)  =  4(d+1)+ \frac {  15(2d+1)+3}4- 7(2d+1)
$$
$$
=  \frac {6-10d}4 \leq - 1 <0 .
$$

 In Case (e), since $d \geq1$, $ r_{(n-1,1,4,2d) } \leq n$ and $n \geq4$, we get
 $$ g(n,4,2d)
= (d+1)(n+1)  
   +
   \frac  { {(n+3)(n+2)(n+1)n}  {(2d+1)}   }  {24(n+1)} 
    $$ 
   $$
   + \frac {n    r_{(n-1,1,4,2d)  }    } {n+1}
   - 
   \frac  { {(n+4)(n+3)(n+2)(n+1)}  {(2d+1)}  }  {24(n+2)}
$$
   $$= \frac {12 (n+1) -(2d+1)(3n^2+n) } {24} 
   + \frac {n    r_{(n-1,1,4,2d)  }    } {n+1}$$
    $$\leq \frac {12 (n+1) -3(3n^2+n) } {24} 
   + \frac {n ^2   } {n+1}$$
   $$=  \frac {9 n+12 -9n^2 } {24} 
   + \frac {n ^2-1   } {n+1}+ \frac {1   } {n+1}$$
$$=  \frac { -3n^2+11n +4} {8} 
   -1+ \frac {1   } {n+1}= - \frac {  (n-4)(3n+1)}8-   \frac {n } {n+1} <0 $$
Thus we have proved  that (4) holds in Cases (d) and (e). 

The conclusion now immediately follows from Lemma \ref{lemma3}.

\end{proof}


\section{Grassmann Defectivity}\label{Gd}

In this section we want to point out the consequences of Theorem  \ref{mainteor} in terms of Grassmann defectivity. First of all let us recall some definitions and some results.
 
\begin{defn}\label{GrassDef}   Let $X \subset \mathbb P^ N$ be a closed,  irreducible 
and non-degenerate projective variety of dimension $n$.  Let $0 \leq k \leq s-1 < N$ be integers and let $\mathbb{G}(k,N)$ be the Grassmannian of the $\mathbb{P}^k$'s contained in $\mathbb{P}^N$.

The {\it Grassmann secant variety}  denoted with $Sec_{k,s-1}(X)$  is the Zariski closure of the set
$$\{\Lambda \in \mathbb{G}(k, N) |   \Lambda \hbox{ lies in the linear span of }  s \ \hbox  {independent points of } X \}.$$
\end{defn}

The expected dimension of this variety is 
$$exp \dim(Sec_{k,s-1}(X))=\min\{sn + (k + 1)(s-1 - k), (k + 1)(N - k)\}.$$
Analogously to the classical secant varieties  we define the $(k,s-1)$-defect of $X$ as the number: 
$$
\delta_{k,s-1}(X)= exp \dim(Sec_{k,s-1}(X))-\dim
Sec_{k,s-1}(X). 
$$
(For general information about these defectivities see\cite{ChCo} and  \cite{DF03}).

In \cite{DF03} C. Dionisi e C. Fontanari prove the following proposition, previously proved by Terracini in \cite{Ter15}  in the specific case in which $X$ is a Veronese surface.

\begin{prop}\cite [Proposition 1.3]{DF03} \label{CarlaClaudio}
Let $X\subset  \mathbb{P}^N$ be an irreducible non-degenerate projective
variety of dimension $n$. Let $\phi :\mathbb{P}^k \times X \rightarrow \mathbb{P}^{N(k+1)+k}$ be the Segre
embedding of  $\mathbb{P}^k \times X$. Then $X$ is $(k, s-1)$-defective with defect $\delta_{k,s-1}(X)=\delta$ if and only if $\phi (\mathbb{P}^k \times X)$ is $(s-1)$-defective with defect $\delta_{s-1}(\phi(\mathbb{P}^k \times X))=\delta$.
\end{prop}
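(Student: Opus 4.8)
The plan is to exhibit a single dominant rational map from $\sigma_s(\phi(\mathbb{P}^k\times X))$ onto $Sec_{k,s-1}(X)$ whose generic fibre has a fixed dimension, and to observe that the \emph{same} numerical shift relates the two expected dimensions; the equality of defects then drops out by subtraction. Write $V=K^{k+1}$ and $W=K^{N+1}$, so that $\mathbb{P}^k=\mathbb{P}(V)$, $\mathbb{P}^N=\mathbb{P}(W)$, and the Segre image $S:=\phi(\mathbb{P}^k\times X)$ lies in $\mathbb{P}(V\otimes W)=\mathbb{P}^{N(k+1)+k}$. Identifying $V\otimes W$ with $\mathrm{Hom}(V^\ast,W)$, the affine cone $\widehat S$ consists of the rank-one homomorphisms $v\otimes w$ with $[w]\in X$, the image of $v\otimes w$ being the line $\langle w\rangle$ with $[w]\in X$. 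I would then consider the rational ``image'' map
$$
\rho:\sigma_s(S)\dashrightarrow \mathbb{G}(k,N),\qquad [T]\longmapsto \mathbb{P}(\IM T),
$$
defined on the dense open locus where $T$, seen as a homomorphism $V^\ast\to W$, has maximal rank $k+1$; this locus is nonempty exactly because $s\ge k+1$, which is guaranteed by the hypothesis $k\le s-1$.

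The heart of the argument is to prove that $\rho$ is dominant onto $Sec_{k,s-1}(X)$ with generic fibre of dimension $(k+1)^2-1$. For dominance, a generic point of $\sigma_s(S)$ is $[T]$ with $T=\sum_{i=1}^s v_i\otimes w_i$ and $[w_i]\in X$; for generic $v_i,w_i$ the subspace $\IM T$ is a $(k+1)$-dimensional subspace of $\langle w_1,\dots,w_s\rangle$, so $\mathbb{P}(\IM T)$ is a $k$-plane contained in the span of $s$ points of $X$, i.e. a point of $Sec_{k,s-1}(X)$; conversely any $U\subseteq\langle w_1,\dots,w_s\rangle$ of dimension $k+1$ is realised as $\IM T$ for a suitable $T=\sum v_i\otimes w_i$, so $\rho$ is dominant. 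For the fibre over a generic $\Lambda=\mathbb{P}(U)$, with $U\subseteq\langle w_1,\dots,w_s\rangle$ and $[w_i]\in X$, I would record the containment
$$
V\otimes U\ \subseteq\ V\otimes\langle w_1,\dots,w_s\rangle\ =\ \sum_{i=1}^s V\otimes w_i\ \subseteq\ \widehat{\sigma_s(S)},
$$
where the last inclusion holds because every element of $\sum_i V\otimes w_i$ is already a sum of $s$ decomposables $v_i\otimes w_i$ with $[w_i]\in X$. Since $\IM T=U$ forces $T\in V\otimes U$, the fibre $\rho^{-1}(\Lambda)$ is precisely the maximal-rank locus of $\mathbb{P}(V\otimes U)\cong\mathbb{P}^{(k+1)^2-1}$, which is dense, so it has dimension $(k+1)^2-1$. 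The theorem on the dimension of the fibres of a dominant morphism then gives
$$
\dim\sigma_s(S)=\dim Sec_{k,s-1}(X)+(k+1)^2-1. \eqno{(\star)}
$$

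It remains a purely arithmetic check that the expected dimensions satisfy the same relation. Rewriting $sn+(k+1)(s-1-k)=s(n+k+1)-(k+1)^2$ and adding $(k+1)^2-1$ to it gives $s(n+k+1)-1$, while adding $(k+1)^2-1$ to $(k+1)(N-k)$ gives $(k+1)(N+1)-1=N(k+1)+k$; these are exactly the two entries of $\min\{N(k+1)+k,\,s(n+k+1)-1\}$, the expected dimension of $\sigma_s(S)$. Since $\min\{A,B\}+c=\min\{A+c,B+c\}$, this shows
$$
\mathrm{exp}\dim\,\sigma_s(S)=\mathrm{exp}\dim\,Sec_{k,s-1}(X)+(k+1)^2-1. \eqno{(\star\star)}
$$
Subtracting $(\star)$ from $(\star\star)$ cancels the constant $(k+1)^2-1$ and yields $\delta_{s-1}(\phi(\mathbb{P}^k\times X))=\delta_{k,s-1}(X)$. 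In particular one defect is positive if and only if the other is, and they are equal, which is exactly the asserted equivalence.

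I expect the fibre computation to be the main obstacle: one must verify that a maximal-rank $T$ genuinely lies on $\sigma_s(S)$ (rather than merely having the correct image) and that the generic fibre is irreducible of the claimed dimension, so that the fibre-dimension theorem applies. The containment $V\otimes U\subseteq\widehat{\sigma_s(S)}$ above disposes of the first issue cleanly, and the irreducibility of $\mathbb{P}(V\otimes U)$ together with density of its maximal-rank locus disposes of the second; the remaining genericity bookkeeping (generic rank equal to $k+1$, and $\overline{\IM\rho}=Sec_{k,s-1}(X)$) is then routine. An alternative route through Terracini's Lemma \ref{TerrLemma}, comparing $\langle \widehat{T}_{S,P_1},\dots,\widehat{T}_{S,P_s}\rangle$ with a tangent-space description of $Sec_{k,s-1}(X)$, reaches the same relation $(\star)$ but demands a more delicate tangent computation, which the fibration argument avoids.
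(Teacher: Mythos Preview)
The paper does not prove this proposition; it is simply quoted from \cite{DF03}, so there is no argument in the paper itself to compare against. Your fibration argument is correct: the rational map $\rho:[T]\mapsto\mathbb{P}(\IM T)$ is well defined on the rank-$(k+1)$ locus (nonempty because $s\ge k+1$ and $X$ is non-degenerate), it is dominant onto $Sec_{k,s-1}(X)$, and the containment $V\otimes U\subseteq V\otimes\langle w_1,\dots,w_s\rangle\subseteq\widehat{\sigma_s(S)}$ identifies the generic fibre with the full-rank locus of $\mathbb{P}(V\otimes U)\cong\mathbb{P}^{(k+1)^2-1}$. The arithmetic check that the expected dimensions differ by the same constant $(k+1)^2-1$ is exactly right, and subtraction gives the equality of defects.

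For comparison, the proof in \cite{DF03} (as its title ``\`a la Terracini'' suggests) goes through Terracini's Lemma: one computes that the span of the tangent spaces to $\phi(\mathbb{P}^k\times X)$ at $s$ generic points $(p_i,Q_i)$ is $\mathbb{P}(V\otimes\langle \widehat T_{X,Q_1},\dots,\widehat T_{X,Q_s}\rangle)$, whose dimension is $(k+1)\cdot\dim\langle \widehat T_{X,Q_1},\dots,\widehat T_{X,Q_s}\rangle-1$, and then relates the latter span to the tangent space of $Sec_{k,s-1}(X)$. Your incidence-correspondence approach bypasses the tangent-space bookkeeping entirely and is arguably more transparent; the Terracini route, on the other hand, makes the role of the linear span $\langle T_{X,Q_i}\rangle$ explicit, which is sometimes useful when one wants finer information than just the dimension.
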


Hence the following corollary arises as natural consequence of Theorem \ref{mainteor} and Proposition \ref{CarlaClaudio}.

\begin{cor}\label{cor1} The Veronese varieties $\nu_a(\mathbb{P}^n)$ are $(1,s-1)$-defective if and only if $n=2$, $a=3$ and $s=5$. 
\end{cor}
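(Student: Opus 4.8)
The plan is to deduce Corollary~\ref{cor1} directly from Theorem~\ref{mainteor} and Proposition~\ref{CarlaClaudio}, with no further geometry needed. The key observation is that the Segre embedding $\phi(\mathbb{P}^1\times\nu_a(\mathbb{P}^n))$ is precisely the Segre-Veronese variety $X_{(1,n,1,a)}$ (equivalently $X_{(n,1,a,1)}$, after swapping factors, which does not change the variety nor its secant dimensions). Indeed, composing the Veronese $\nu_a\colon\mathbb{P}^n\to\mathbb{P}^{\binom{n+a}{n}-1}$ with the Segre embedding of $\mathbb{P}^1\times\mathbb{P}^{\binom{n+a}{n}-1}$ yields exactly the embedding of $\mathbb{P}^1\times\mathbb{P}^n$ by the sections of $\mathcal{O}(1,a)$. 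So by Proposition~\ref{CarlaClaudio}, $\nu_a(\mathbb{P}^n)$ is $(1,s-1)$-defective if and only if $\sigma_s(X_{(n,1,a,1)})$ is $(s-1)$-defective.

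First I would set up this identification carefully, invoking Notation~\ref{SV} to record that $X_{(n,1,a,1)}=X_{(1,n,1,a)}$ as subvarieties of the same $\mathbb{P}^N$, $N=\binom{n+a}{n}\cdot 2-1$, so the Segre embedding $\phi(\mathbb{P}^1\times\nu_a(\mathbb{P}^n))$ in $\mathbb{P}^{N(k+1)+k}$ with $k=1$ lands in the correct ambient dimension $2\binom{n+a}{n}-1=N'$ matching $X_{(1,n,1,a)}$'s. Then I would apply Theorem~\ref{mainteor} with $b=1$: the defective cases listed there are $n=2$, $(a,b)=(3,1)$, $s=5$, and $(a,b)=(2,2d)$, $d(n+1)+1\le s\le(d+1)(n+1)-1$. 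Since $b=1$ is odd, the second family never occurs (it requires $b$ even). Hence the only defective case with $b=1$ is $n=2$, $a=3$, $s=5$. Translating back through Proposition~\ref{CarlaClaudio}, $\nu_a(\mathbb{P}^n)$ is $(1,s-1)$-defective exactly when $n=2$, $a=3$, $s=5$.

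I should also check the hypotheses of Proposition~\ref{CarlaClaudio} and Definition~\ref{GrassDef} are met, i.e. that $0\le k\le s-1<N$: with $k=1$ this needs $s\ge 2$, which is harmless since $Sec_{1,0}(X)$ is just $X$ itself and carries no defect, and $s-1<N$ holds in the relevant range (in particular for $n=2$, $a=3$, $s=5$ we have $N=2\binom{5}{2}-1=19>4$). A minor point worth a sentence: for $n=1$ the Veronese $\nu_a(\mathbb{P}^1)$ is a rational normal curve, and Theorem~\ref{teoremaChiantiniCil} (or Theorem~\ref{mainteor} with $n=1$) confirms no defectivity, consistent with the statement. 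The main obstacle here is essentially bookkeeping: making the Segre-of-Veronese $=$ Segre-Veronese identification precise and checking the edge conditions on $k,s,N$; there is no hard estimate or geometric construction, since all the real work was already done in Theorem~\ref{mainteor}.

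\begin{proof}
By construction, the composition of the Veronese embedding $\nu_a\colon\mathbb{P}^n\hookrightarrow\mathbb{P}^{\binom{n+a}{n}-1}$ with the Segre embedding $\phi\colon\mathbb{P}^1\times\mathbb{P}^{\binom{n+a}{n}-1}\hookrightarrow\mathbb{P}^{2\binom{n+a}{n}-1}$ is the embedding of $\mathbb{P}^1\times\mathbb{P}^n$ via the sections of $\mathcal{O}(1,a)$; that is, $\phi(\mathbb{P}^1\times\nu_a(\mathbb{P}^n))=X_{(1,n,1,a)}=X_{(n,1,a,1)}$ in the notation of \ref{SV}, where we used that interchanging the two factors does not alter the variety nor the dimensions of its secant varieties. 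By Proposition~\ref{CarlaClaudio} (applied with $k=1$, $X=\nu_a(\mathbb{P}^n)$), the variety $\nu_a(\mathbb{P}^n)$ is $(1,s-1)$-defective with defect $\delta$ if and only if $\sigma_s(X_{(n,1,a,1)})$ is $(s-1)$-defective with defect $\delta$.

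Now apply Theorem~\ref{mainteor} with $b=1$. The exceptional cases listed there are $n=2$, $(a,b)=(3,1)$, $s=5$, and $(a,b)=(2,2d)$ with $d(n+1)+1\le s\le (d+1)(n+1)-1$. The latter family requires $b=2d$ to be even, so it does not occur for $b=1$. Hence, for $b=1$, the variety $\sigma_s(X_{(n,1,a,1)})$ fails to have the expected dimension precisely when $n=2$, $a=3$ and $s=5$, in which case its defect is $1$. By the equivalence above, $\nu_a(\mathbb{P}^n)$ is $(1,s-1)$-defective if and only if $n=2$, $a=3$ and $s=5$.
\end{proof}
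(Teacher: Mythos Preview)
Your proof is correct and follows exactly the paper's own approach: apply Theorem~\ref{mainteor} with $b=1$ (so the $(2,2d)$ family is excluded) and translate via Proposition~\ref{CarlaClaudio}. The only difference is that you spell out the identification $\phi(\mathbb{P}^1\times\nu_a(\mathbb{P}^n))=X_{(n,1,a,1)}$ and the edge conditions in more detail than the paper does.
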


\begin{proof} Apply Theorem \ref{mainteor},  in the specific case of the Segre-Veronese embedding of $\mathbb{P}^n\times \mathbb{P}^1$ with $\mathcal{O}(a,1)$. Then use  Proposition \ref{CarlaClaudio}.
\end{proof}


Following Terracini's paper, we can  translate the previous result in terms of the number of homogeneous polynomials of certain degree $a$ that can be written as linear combination of  $a$-th powers of the same linear forms; in fact, the problem of finding when $Sec_{k, s-1}(\nu_a(\mathbb{P}^n))$ coincides with the Grassmannian $\mathbb{G}(k,N)$, ($N={n+a\choose a}-1$), is equivalent to a question related to the decomposition of $k+1$ general forms of degree $a$ as linear combination of $a$-th powers of the same $s$ linear forms. The classical version of this problem is originally due to A. Terracini (see \cite {Ter15}) and J. Bronowski (see \cite {Br}). In terms of forms the Corollary \ref{cor1} can be translated as follows:
 
\begin{rem} \label{Ter}
The closure of the set of all pairs of homogeneous degree $a$ polynomials
in $n+1$ variables which may be written as linear combinations of 
$a$-th powers of the same $s$ linear forms
 $L_1, \ldots ,L_s \in K[ x_0, \ldots , x_n]_1$ has the expect dimension if and only if  $(n, a,s)\neq (2,3,5)$.
 \end{rem}

\textbf{Acknowledgments} All authors supported by MIUR funds. The second author was partially supported by CIRM--FBK (Centro Internazionale Ricerca Matematica--Fondazione Bruno Kessler), Trento, Italy; by Project Galaad, INRIA (Institut national de recherche en informatique et en automatique), Sophia Antipolis M\'editerran\'ee, France; Institut Mittag-Leffler (Sweden); and by International Individual Marie Curie Fellowship (IFP7 Program). 




\end{document}